\def\house#1{\setbox1=\hbox{$\,#1\,$}%
	\dimen1=\ht1 \advance\dimen1 by 2pt \dimen2=\dp1 \advance\dimen2 by 2pt
	\setbox1=\hbox{\vrule height\dimen1 depth\dimen2\box1\vrule}%
	\setbox1=\vbox{\hrule\box1}%
	\advance\dimen1 by .4pt \ht1=\dimen1
	\advance\dimen2 by .4pt \dp1=\dimen2 \box1\relax}
\newtheorem{theorem}{Theorem}
\newtheorem*{theorem*}{Theorem}
\newtheorem{lemma}{Lemma}
\newtheorem*{acknowledgements*}{Acknowledgements}
\newcommand{\ve}{\varepsilon}
\newcommand\blfootnote[1]{
	\begingroup
	\renewcommand\thefootnote{}\footnote{#1}
	\endgroup
}
\begin{document}
\title[On the Distribution and Maximal Behavior]{On the Distribution and Maximal Behavior of $L(1, \chi_D)$ over Hyperelliptic Curves}
\author{Pranendu Darbar}
\address[Pranendu Darbar]{School of Mathematics and Statistics, \\ University of New South
	Wales, Sydney NSW 2052, Australia}
\email{darbarpranendu100@gmail.com}

\keywords{Finite fields, Function fields, Large values of $L$-functions, Hyperelliptic curves}

 \begin{abstract}
 We improve the range of uniformity in the double-exponential decay of the tail of the distribution established by Lumley~\cite{Lumley} for the quadratic Dirichlet $L$-function $L(1, \chi_D)$ over the ensemble of hyperelliptic curves of genus~$g$ defined over a fixed finite field~$\mathbb{F}_q$, in the limit as $g \to \infty$. Furthermore, we apply a long resonator method to show that this range of uniformity may persist up to its conjectural level by establishing a double-exponential decay lower bound for the corresponding distribution function.
   \end{abstract}

\blfootnote{2010 {\it Mathematics Subject Classification}: 11R59, 11G20, 11T06.}

\maketitle
  
\section{Introduction} 
One of the central problems in analytic number theory is to determine the value distribution and obtain sharp bounds for the maximal size of Dirichlet $L$-functions on the line $\operatorname{Re}(s)=1$. These quantities are deeply intertwined with fundamental arithmetic invariants such as class numbers of number fields, and are closely linked to estimates for character sums via Fourier analysis. In particular, the tail behavior of the value distribution plays a decisive role in determining the true order of magnitude of extreme values.

Over the past few decades, significant progress has been made toward understanding the tails of these distributions and the maximal size of $L$-functions on $\operatorname{Re}(s)=1$. This line of research encompasses the Riemann zeta function and Dirichlet $L$-functions in the modulus aspect (see~\cite{AMM, AMMP, BS3, DOKIC, GSzeta}), as well as the family of quadratic Dirichlet $L$-functions $L(1,\chi_d)$ (see~\cite{DM, GS, Lumley}), and more generally, various families of automorphic $L$-functions (see~\cite{Lamzouri}).

An important $\mathrm{GL}(1)$ family of symplectic type is formed by the quadratic Dirichlet $L$-functions $L(s,\chi_d)$, where $d$ ranges over fundamental discriminants. The analytic study of this family is notably challenging due to the lack of a natural orthogonality relation among the characters. The tail behavior of $L(1,\chi_d)$ is closely tied to the distribution of class numbers of quadratic number fields. In this direction, Granville and Soundararajan~\cite{GS} carried out a seminal analysis of these quantities, and following their approach, Dahl and Lamzouri~\cite{DL} investigated an interesting subfamily, known as the \emph{Chowla family}, corresponding to real quadratic fields.


\subsection{Distribution of $L$-functions in the hyperelliptic ensemble}\label{large values section}
We now move our discussion to the setup of $L$-functions over function fields, a topic of interest in modern research in number theory; see, e.g., \cite{AT, AK, BF, BF2, DFL, DL, FR, Floreanegative, Florea, Jung, Lumley2}.
This includes investigations of various statistical properties of these $L$-functions, such as moments, non-vanishing, and value distribution.
 
 Before we enunciate the main theorem of this paper, we need to present
 some basic notations on function fields.
  Let $\mathbb{F}_{q}$ be a finite field of odd cardinality and $\mathbb{F}_{q}[t]$ be the polynomial ring over $\mathbb{F}_{q}$ in variable $t$. Let $D\in \mathbb{F}_q[t]$ be a monic square-free polynomial. The quadratic character $\chi_D$ attached  to $D$ is defined using quadratic residue symbol for $\mathbb{F}_{q}[t]$ by $\chi_{D}(f)=\left(\frac{f}{D}\right)$ and the corresponding Dirichlet $L$-function is denoted by $L(s, \chi_D)$. 
 Let $\mathcal{H}_n$
 be the family of all curves given in affine form by $C_P: y^2=D(t)$, where $D$ be a monic square-free polynomial of degree $n$. These curve are non-singular of genus $g$ given by \eqref{lambda} and \eqref{genus}.

 In~\cite{Lumley}, Lumley studied the distribution of $L(1,\chi_D)$ as $D$ ranges over $\mathcal{H}_n$.  
 The following theorem strengthens the range of uniformity for the tail of the distribution of $L(1,\chi_D)$ established in~\cite[Theorem~1.2]{Lumley}.
Throughout, $\log$ denotes the logarithm to base $q$, while $\ln$ is the natural logarithm, and $\log_j$ (respectively $\ln_j$) for the $j$-fold iterated logarithm.
 \begin{theorem}\label{main theorem}
 	Let $q \ge 3$ be fixed. Uniformly for 
 $
 	\tau \le \log n + \log_2 n - \theta(n),
 	$
 	where $2\leq \theta(n)\ll \log_3 n$ is any function tending arbitrarily slowly to infinity as $n \to \infty$, we have
 	\[
 	\phi_n(\tau)
 	:= \frac{1}{|\mathcal{H}_n|}
 	\sum_{\substack{D \in \mathcal{H}_n \\ L(1, \chi_D) \ge e^{\gamma}\tau}} 1
 	= \exp\!\left(
 	-\,C_1\!\left(q^{\{\log \kappa(\tau)\}}\right)
 	\,\frac{q^{\,\tau - C_0\!\left(q^{\{\log \kappa(\tau)\}}\right)}}{\tau}
 	\bigl(1 + o_\theta(1)\bigr)
 	\right),
 	\]
 	where the constants $C_0\!\left(q^{\{\log \kappa(\tau)\}}\right)$ and $C_1\!\left(q^{\{\log \kappa(\tau)\}}\right)$ depend on $\tau$, $q$, and $\kappa(\tau)$, yet remain bounded as the argument varies between $1$ and $q$.  
 	Further details on these quantities can be found in~\cite[Theorem~1.3]{Lumley}.
 \end{theorem}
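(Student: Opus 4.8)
The plan is to follow the method of Granville--Soundararajan, in the function-field incarnation given by Lumley, and to compare the distribution of $L(1,\chi_D)$ over $\mathcal{H}_n$ with that of a random Euler-product model, the improvement over \cite{Lumley} coming entirely from a sharper and more uniform treatment of the character sums $\sum_{D\in\mathcal{H}_n}\chi_D(f)$. First I would fix $\tau$ in the admissible range, set $y=\kappa(\tau)$, and truncate: a second-moment estimate for the contribution of irreducibles of degree $>y$ (together with the observation that the prime-power terms $j\ge2$ contribute only a bounded constant, absorbed into $C_0$) shows that for all but $o(|\mathcal{H}_n|)$ of the $D\in\mathcal{H}_n$ one has $L(1,\chi_D)=(1+o(1))\prod_{\deg P\le y}\bigl(1-\chi_D(P)/|P|\bigr)^{-1}$. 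Hence, up to a negligible set, the event $L(1,\chi_D)\ge e^{\gamma}\tau$ is the event that $\chi_D(P)$ is strongly biased towards $+1$ for every monic irreducible $P$ of degree $\le y$; one is thus reduced to estimating how often $D\in\mathcal{H}_n$ is a quadratic residue modulo (almost) all such $P$, and then to comparing this frequency with the corresponding probability in Lumley's random model, whose tail asymptotic (\cite[Theorem~1.3]{Lumley}) is exactly what produces $C_0$, $C_1$ and the oscillating factor $q^{\{\log\kappa(\tau)\}}$.

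Next I would carry out the counting. Writing $\pi(y)$ for the number of monic irreducibles of degree $\le y$ and detecting the event by expanding $\prod_{\deg P\le y}\tfrac12\bigl(1+\chi_D(P)\bigr)$ and summing over $\mathcal{H}_n$, the term $f=1$ contributes the expected main term $|\mathcal{H}_n|\,2^{-\pi(y)}$, which (after incorporating the square-free density corrections) matches the model probability $2^{-\pi(y)}\prod_{\deg P\le y}\bigl(1+|P|^{-1}\bigr)^{-1}$; everything else is $\sum_{1\ne f\mid\prod_{\deg P\le y}P}\sum_{D\in\mathcal{H}_n}\chi_D(f)$. For each such $f$ I would use quadratic reciprocity in $\mathbb{F}_q[t]$ to recast $\sum_{D\in\mathcal{H}_n}\chi_D(f)$ as a sum of a non-principal character over square-free monics of degree $n$: this vanishes, up to a square-free-sieve error $\ll(\deg f)^{O(1)}q^{n/2}$, once $\deg f\lesssim n$, and, after inserting the approximate functional equation for $L(s,\chi_D)$ in the style of Florea, stays $\ll(\deg f)^{O(1)}q^{n/2}$ up to $\deg f\lesssim 2n$. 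Combined with a Cauchy--Schwarz step that trades the individual bounds for the $\ell^{2}$ identity $\sum_{f}\bigl|\sum_{D}\chi_D(f)\bigr|^{2}=\sum_{D,D'}\prod_{\deg P\le y}\bigl(1+\chi_D(P)\chi_{D'}(P)\bigr)$ and exhibits the diagonal $D=D'$ as dominant, this keeps the total error below the main term provided $\pi(y)\asymp q^{y}/y$ is $\ll n$; since $\kappa(\tau)\asymp\tau$ this is precisely the constraint $\tau\le\log n+\log_2 n-\theta(n)$, with the slowly growing $\theta$ supplying the slack that renders every error $o_\theta(1)$.

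The main obstacle, and the feature that pins down the shape of the range, is the treatment of those divisors $f$ of $\prod_{\deg P\le y}P$ of degree exceeding $n$ (or $2n$): since $\deg\bigl(\prod_{\deg P\le y}P\bigr)\asymp q^{y}$ is already of size roughly $n\log n$ near the endpoint of the range, most such divisors lie beyond the reach of square-root cancellation for $\sum_{D\in\mathcal{H}_n}\chi_D(f)$, and the trivial bound for them is far too lossy to survive the expansion. Controlling this contribution --- via the Cauchy--Schwarz/large-sieve device above, a more careful use of the functional equation, and, for the lower bound near the edge, a resonator-type smoothing that localises the relevant character sums so that only short sums intervene --- is the heart of the matter and of the improvement over \cite{Lumley}. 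Everything downstream is essentially bookkeeping: optimising the truncation $y$, reassembling the count into $\phi_n(\tau)$, and matching the precise dependence on $\kappa(\tau)$ against the random-model tail of \cite[Theorem~1.3]{Lumley} to recover $C_0$, $C_1$ and $q^{\{\log\kappa(\tau)\}}$; the one genuine subtlety there is to track the dependence of every implied constant on $\theta$ so that the asserted uniformity holds for an arbitrarily slowly growing $\theta(n)$.
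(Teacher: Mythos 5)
Your overall picture --- truncate the Euler product, control $\sum_{D\in\mathcal{H}_n}\chi_D(f)$ uniformly in $\deg f$, and compare against Lumley's random model --- is consistent with the paper's philosophy, and you correctly identify that the technical bottleneck is the character sums once $\deg f$ grows beyond $n$. However, the counting mechanism you propose is not correct and cannot yield the theorem. The event $L(1,\chi_D)\ge e^{\gamma}\tau$ is not the event that $\chi_D(P)=+1$ for every monic irreducible of degree $\le y$: the indicator $\prod_{\deg P\le y}\tfrac12(1+\chi_D(P))$ detects a much smaller set (all signs $+1$), and a tail of the form $2^{-\pi(y)}\prod(1+|P|^{-1})^{-1}$ is simply not the random-model probability. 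The precise asymptotic, with the constants $C_0$, $C_1$ and in particular the sawtooth factor $q^{\{\log\kappa(\tau)\}}$, comes from a large-deviation (Cram\'er/saddle-point) analysis of $\log L(1,\mathbb{X};y)$, not from an indicator expansion. Concretely, the paper computes high real moments $\frac{1}{|\mathcal{H}_n|}\sum_D L(1,\chi_D;y)^k$ uniformly for $k$ up to roughly $n\log n/\log f(n)$, using the sharpened character-sum bound of Lemma~\ref{character sum over nonsquare} to control the non-square part of the $2k$-fold expansion; it then compares these to $\mathbb{E}(L(1,\mathbb{X};y)^k)$, passes to the Mellin transform $r\int_0^\infty u^{r-1}\phi_n(u;y)\,du$, and extracts $\phi_n(\tau;y)$ by Rankin's trick together with a saddle-point localisation at $r=\kappa(\tau)$. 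That is where $C_0$, $C_1$, and $q^{\{\log\kappa(\tau)\}}$ come from; your indicator-based count would at best match the exponent $q^{\tau}/\tau$ up to an undetermined multiplicative constant and would miss the oscillation entirely.

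A second genuine gap is in the truncation step. A second-moment estimate for the tail $\sum_{\deg P>y}\chi_D(P)/|P|$ can only certify a truncation length of the order $M=3\log n$, which is exactly what Lumley's argument uses and what caps the uniformity at $\log n - 2\log_2 n - \log_3 n$. Reaching $\tau\le\log n+\log_2 n-\theta(n)$ requires truncating at $N=\log n+\log_2 n+3\log f(n)$ for $f(n)$ growing arbitrarily slowly, which is the content of Lemma~\ref{full l to its truncation}: the interval $[N,M]$ is split into unit blocks $[N_j,N_{j+1}]$, Chebyshev's inequality is applied at a high, block-dependent moment $k_j$, and each such $2k_j$-th moment is controlled by the large-sieve bound built out of Lemmas~\ref{character sum over square} and~\ref{character sum over nonsquare}. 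Without this refinement the exceptional set from the truncation alone already destroys the claimed range. (The resonator you invoke near the edge is used in the paper only to prove the lower bound of Theorem~\ref{theorem on resonance method}; it plays no role in Theorem~\ref{main theorem}.)
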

 
 We emphasize that Theorem~\ref{main theorem} subsequently improves the uniformity range of~\cite[Theorem~1.2]{Lumley} from 
 $\log n - 2\log_2 n - \log_3 n$ to $\log n + \log_2 n - \theta(n)$, where $\theta(n)$ grows arbitrarily slowly to infinity as $n \to \infty$. A similar range of uniformity as in Lumley’s work is also found in the work of Dahl and Lamzouri~\cite[Theorem~1.2]{DL}. 
 For a discussion on the significance of these uniformity ranges, see~\cite{MV}.   
Motivated by the work of Granville and Soundararajan~\cite{GS}, Lumley conjectured that as $n\to \infty$,
 \[
 \max_{D\in \mathcal{H}_{n}} L(1,\chi_D) = \log n + \log_2 n + O(1).
 \]
 It is plausible that if the asymptotic formula in Theorem~\ref{main theorem} were to hold up to the boundary of the admissible range, then the above conjecture would follow.  
 Our result, however, falls slightly short of this limit, requiring that $\theta(n)$ remain bounded as stated in Theorem~\ref{main theorem}.  
 Nevertheless, this constitutes the first unconditional result approaching the conjectural range, comparing upon the works of~\cite{DDLL, DL, GS, GSzeta, Lamzouri, Lumley}.  
 Conditionally on the Generalized Riemann Hypothesis (GRH), the corresponding result was established by Granville and Soundararajan~\cite[Theorem~4]{GS} in the number field case.  
 
Finally, by employing a long resonator method, we provide further evidence that the double-exponential decay may persists up to the conjectured level, where $\theta(n)$ transitions from an arbitrarily slowly unbounded function to a bounded one.

\begin{theorem}\label{theorem on resonance method}
	Let $q \ge 3$ and for every $\beta > 0$, define $\tau_{\beta, n}
	= e^{\gamma}\!\left(\log n + \log_2 n + C_2(q) - \beta\right),$ where
	\[
	C_2(q)
	= \frac{1}{2}
	- \left(\frac{\pi}{4} - \frac{\ln 2}{2}\right)\frac{q}{q-1}
	+ \log\!\left(
	\frac{(q-1)\ln q}{2q(3\ln 2 - \pi/2)}
	\right).
	\]
	Then we have
	\[
\phi_n(\beta)
:= \frac{1}{|\mathcal{H}_n|}
\sum_{\substack{D \in \mathcal{H}_n \\ L(1, \chi_D) \ge \tau_{\beta, n}}} 1
	\ge \exp\!\left(
	-\frac{q^{-\beta}\ln q}{2}
	\left(1 + o(1)\right)
	\right).
	\]
\end{theorem}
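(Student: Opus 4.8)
The plan is to establish Theorem~\ref{theorem on resonance method} by a \emph{long resonator} construction, in the spirit of the resonance method for large values of $L$-functions. Fix $\beta>0$; for a cutoff $\kappa=\kappa_n$ comparable to $\log n+\log_2 n$ and weights $x_P\in[0,1]$ attached to the monic irreducibles $P$ with $\deg P\le\kappa$ — both to be pinned down only in the final optimization — put
\[
R(D)=\prod_{\deg P\le\kappa}\bigl(1+x_P\,\chi_D(P)\bigr)=\sum_{\substack{f\ \mathrm{squarefree}\\ P\mid f\,\Rightarrow\,\deg P\le\kappa}}\Bigl(\prod_{P\mid f}x_P\Bigr)\chi_D(f).
\]
The construction is \emph{long} in the sense that $\kappa_n$ is pushed to essentially the largest size a resonator for $L(1,\chi_D)$ over $\mathcal H_n$ can take, so that $R(D)^2$ expands into a linear combination of the characters $\chi_D(h)$ with $\deg h$ as large as a fixed power of $n$ — a range where the orthogonality of the $\chi_D$ over $\mathcal H_n$ is no longer exact. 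I would then study the weighted moments
\[
\mathcal R_0=\sum_{D\in\mathcal H_n}R(D)^2,\qquad \mathcal R_1=\sum_{D\in\mathcal H_n}L(1,\chi_D)\,R(D)^2,
\]
the point being that $\mathcal R_1/\mathcal R_0$ is an $R^2$-weighted mean of $L(1,\chi_D)$ which, for well-chosen weights, sits comfortably above $\tau_{\beta,n}$, while the $R^2$-mass is spread widely enough that many $D$ genuinely attain values this large.

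\emph{Evaluating the moments.} Expanding $R(D)^2=\sum_h r(h)\chi_D(h)$ reduces the problem to estimating $\sum_{D\in\mathcal H_n}\chi_D(h)$. The diagonal $h=\square$ yields, after the usual M\"obius sieving for the squarefree condition, $\mathcal R_0=(1+o(1))\,|\mathcal H_n|\prod_{\deg P\le\kappa}(1+x_P^2)$; inserting $\ln L(1,\chi_D)=\sum_{\deg P\le n}\chi_D(P)/|P|+O(1)$ (handling the prime powers and the primes of degree $>\kappa$ separately) leads to
\[
\frac{\mathcal R_1}{\mathcal R_0}=\exp\!\Bigl(\sum_{\deg P\le\kappa}\frac{1}{|P|}\cdot\frac{2x_P}{1+x_P^2}+K(q)+o(1)\Bigr)
\]
for an explicit constant $K(q)$. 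The heart of the matter is the off-diagonal contribution $h\ne\square$. By quadratic reciprocity, $\sum_{D\in\mathcal H_n}\chi_D(h)$ equals, after removing the square part of $h$, a character sum whose conductor is the squarefree kernel $h_1$ of $h$, of degree at most $\sum_{\deg P\le\kappa}\deg P$ — large, but still a bounded power of $n$. One then bounds $\sum_{\deg F=m}\psi_{h_1}(F)$ using the Riemann Hypothesis for curves over $\mathbb F_q$ (this sum moreover vanishes once $m\ge\deg h_1$), performs the M\"obius sieving, and sums the resulting bounds against the coefficients $r(h)$. Keeping the dependence on $\deg h_1$ under control when it is comparable to $n$, so that the total off-diagonal mass is genuinely $o(\mathcal R_0)$, is the delicate point — and the step I expect to be the main obstacle; it is precisely the price of the resonator's length.

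\emph{From the moments to the distribution.} Since $\mathcal R_1/\mathcal R_0$ is an $R^2$-weighted average of $L(1,\chi_D)$, splitting $\mathcal R_1$ according to whether $L(1,\chi_D)\ge\tau_{\beta,n}$ and using an unconditional upper bound of the form $\max_{D\in\mathcal H_n}L(1,\chi_D)\ll_q\log n$ (the function-field analog of Littlewood's estimate) gives $\sum_{D\in\mathcal H_n,\,L(1,\chi_D)\ge\tau_{\beta,n}}R(D)^2\gg\mathcal R_0$, provided the weights are tuned so that $\mathcal R_1/\mathcal R_0$ exceeds $\tau_{\beta,n}$ by a fixed factor. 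Together with the trivial bound $\max_D R(D)^2\le\prod_{\deg P\le\kappa}(1+x_P)^2$, this yields
\[
\phi_n(\beta)\ \ge\ \frac{1}{|\mathcal H_n|}\cdot\frac{\displaystyle\sum_{\substack{D\in\mathcal H_n\\ L(1,\chi_D)\ge\tau_{\beta,n}}}R(D)^2}{\displaystyle\max_{D\in\mathcal H_n}R(D)^2}\ \gg\ \frac{\mathcal R_0}{|\mathcal H_n|\,\max_{D}R(D)^2}\ \gg\ \prod_{\deg P\le\kappa}\frac{1+x_P^2}{(1+x_P)^2}.
\]

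\emph{The optimization.} It remains to choose $\kappa_n$ and the weights $x_P$ so as to maximize $\prod_{\deg P\le\kappa}\tfrac{1+x_P^2}{(1+x_P)^2}$ subject to $\mathcal R_1/\mathcal R_0\gg\tau_{\beta,n}$. A Lagrange-multiplier computation gives the extremal profile $\frac{1+x_P^2}{(1+x_P)^2}=\lambda/|P|$, so that $x_P\approx1$ for $|P|\le\lambda$ and $x_P$ decreases smoothly from $1$ to $0$ across the window $|P|\in[\lambda,2\lambda]$. Evaluating both products with this profile, the contribution of the transition window is governed by the integrals
\[
\int_0^1\ln\frac{(1+x)^2}{1+x^2}\,dx=3\ln 2-\frac{\pi}{2},\qquad \int_0^1\frac{1-x}{1+x^2}\,dx=\frac{\pi}{4}-\frac{\ln 2}{2},
\]
and these, combined with the function-field Mertens asymptotics for $\sum_{\deg P\le y}1/|P|$ and the factor $q/(q-1)$ from counting primes by degree, reproduce precisely the constant $C_2(q)$ and yield the stated lower bound $\phi_n(\beta)\ge\exp\!\bigl(-\tfrac{q^{-\beta}\ln q}{2}(1+o(1))\bigr)$. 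In short, the one genuine difficulty is the off-diagonal analysis of Step two: it is the admissibility of the long resonator — i.e.\ the control of character sums whose conductor may be as large as a power of $n$ — that lets the construction reach the conjectural size $\log n+\log_2 n+O(1)$ instead of stopping short.
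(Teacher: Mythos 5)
Your plan is the same long resonator argument the paper uses, and you even land on the two integrals $\int_0^1\ln\tfrac{(1+x)^2}{1+x^2}\,dx = 3\ln 2 - \tfrac{\pi}{2}$ and $\int_0^1\tfrac{1-x}{1+x^2}\,dx = \tfrac{\pi}{4}-\tfrac{\ln 2}{2}$ that produce $C_2(q)$. The paper realizes the resonator as the truncated Euler product $R_D = \prod_{d(P)<N}\bigl(1-(1-|P|/q^N)\chi_D(P)\bigr)^{-1}$ with $N=\log n+\log_2 n+\log c$, i.e.\ the linear taper of Aistleitner--Mahatab--Munsch rather than your step-plus-transition weight profile, but the controlling product $\prod_P\tfrac{1+r_P^2}{(1+r_P)^2}$ and hence the constants coincide, and the splitting of the weighted first moment against the Littlewood-type bound $\max_D L(1,\chi_D)\ll_q\log n$ is exactly the paper's closing step.

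The genuine gap sits precisely where you flag the ``main obstacle.'' When $q^N\asymp n\log n$, the square-free kernel $\ell_1$ of a typical off-diagonal $h$ has degree up to $\sum_{d(P)<N}d(P)\sim\zeta_{\mathbb{A}}(2)q^N\asymp n\log n$, which exceeds $n$. The RH-for-curves bound you propose gives $|\mathcal{L}(u,\chi_{\ell_1})|\le 2^{\deg\ell_1-1}$, leading after the large sieve to an estimate of the shape $\sum_{D\in\mathcal{H}_n}\chi_D(\ell)\ll q^{(1/2+\varepsilon)n}\,|\ell_1|^\varepsilon$ (cf.\ \cite[Lemma 3.5]{BF2}); but $|\ell_1|^\varepsilon=q^{\varepsilon\deg\ell_1}$ is then $q^{\varepsilon n\log n}$ and swamps the main term $\asymp q^{(1+\widetilde{c})n}/n$ of $\mathcal{R}_0$, so the off-diagonal is not $o(\mathcal{R}_0)$ for any $\varepsilon>0$. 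The missing input is the paper's Lemma~\ref{character sum over nonsquare}, which sharpens $|\ell_1|^\varepsilon$ to $\exp\bigl((\deg\ell_1)^{1-\varepsilon}\bigr)$, proved by a Borel--Carath\'eodory bound for $\ln\mathcal{L}(u,\chi)$ in the zero-free annulus $q^{-2}<|u|\le q^{-\sigma}$ followed by a Perron truncation of $\ln\mathcal{L}$ to a short Dirichlet polynomial. That sub-polynomial factor is $q^{o(n)}$ even when $\deg\ell_1\asymp n\log n$, which is exactly what lets the off-diagonal be absorbed for $c$ below the threshold~\eqref{optimzed constant}. Without that lemma, the obstacle you identify is a genuine dead end: the resonator cannot be pushed to length $q^N\asymp n\log n$, and the method would not reach $\log n + \log_2 n + O(1)$ with an explicit constant.
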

The best previously known unconditional result in this direction over number fields was due to Granville and Soundararajan~\cite[Theorem~5b]{GS}, while the conditional analogue under the GRH was obtained by the author and Maiti~\cite[Theorem~2]{DM}.

During the proof of Theorem~\ref{theorem on resonance method}, we also observed that
\[
\max_{D\in \mathcal{H}_n} L(1,\chi_D)
\ge e^{\gamma}\Bigl(\log n + \log_2 n + C_2(q) + o(1)\Bigr),
\]
where \(C_2(q)\) is as defined in the statement of the theorem. This result refines~\cite[Theorem~1.6]{Lumley} by providing an explicit value for the constant \(C_2(q)\). More strikingly, one has \(C_2(q) > 0\) for \(q > 10\); for instance, \(C_2(17) \approx 0.04\). This is the first time a positive constant is observed, showing that the maximum of \(L(1,\chi_D)\) can exceed the range \(\log n + \log_2 n\) (see~\cite[Conjecture~2]{MV}).
 
\subsubsection{Applications} For a monic square-free polynomial $D \in \mathbb{F}_q[t]$, define 
\[
h_D = \lvert \mathrm{Pic}(\mathcal{O}_D) \rvert,
\]
where $\mathrm{Pic}(\mathcal{O}_D)$ denotes the Picard group of the ring of integers $\mathcal{O}_D \subset \mathbb{F}_q[t](\sqrt{D(t)})$. 
Artin~\cite{Artin} established a class number formula over the hyperelliptic ensemble that connects $h_D$ and $L(1, \chi_D)$:
\begin{equation*}
L(1, \chi_D) = \frac{\sqrt{q}}{\sqrt{|D|}}\, h_D = q^{-g} h_D 
\quad \text{for } D \in \mathcal{H}_{2g+1}.
\end{equation*}
From Theorem~\ref{main theorem}, we directly extend the range of uniformity in \cite[Corollary~1.8]{Lumley}, showing that the tail of the distribution of large values of $h_D$ over $\mathcal{H}_{2g+1}$ decays doubly exponentially. An analysis towards maximal value for $h_D$ follows from Theorem~\ref{theorem on resonance method}.

For $n = 2g + 2$ and $D \in \mathcal{H}_{2g+2}$, Artin also proved that
\begin{equation*}
L(1, \chi_D) = \frac{q - 1}{\sqrt{|D|}}\, h_D R_D,
\end{equation*}
where $R_D$ denotes the regulator of $\mathcal{O}_D$ (see \cite[Chapter 14]{ROS}). 
Analogous results hold for the tail of the distribution of $h_D R_D$ as $D$ varies over $\mathcal{H}_{2g+2}$, improving~\cite[Corollary~1.9]{Lumley} in a similar manner.
 \subsection{Essence of the paper} We employ two distinct methods to prove Theorems~\ref{main theorem} and~\ref{theorem on resonance method}. 
 The study of the distribution of $L(1, \chi_d)$ over number fields through an underlying probabilistic model was initiated by Granville and Soundararajan~\cite{GS}. 
 Lumley~\cite{Lumley} studied the tail of the distribution for $L(1, \chi_D)$ over function fields from the corresponding probabilistic model (see \cite[eq.~(1.7)]{Lumley}) and computing large moments og $L(1, \chi_D)$ over the family. 
 To extend the range of uniformity stated in Theorem~\ref{main theorem}, we able to compute much higher moments for the short Euler product $L(1, \chi_D; N)$ and then utilize the saddle-point method. 
 The Lemma~\ref{full l to its truncation} provides an optimal way to approximate $L(1, \chi_D)$ by $L(1, \chi_D; N)$, allowing us to taken a shorter truncation length $N$. 
 This step relies crucially on a large sieve type estimate together with Chebyshev’s inequality. 
 
 To prove Theorem~\ref{theorem on resonance method}, we employ the long resonance method for quadratic Dirichlet $L$-functions over number fields, as developed by the author and Maiti in~\cite{DM}. 
 The character sum estimate in Lemma~\ref{character sum over nonsquare} plays a central role in both results: it extends the range of uniformity as in Theorem \ref{main theorem} via computing larger moments of $L(1, \chi_D; N)$ over the family, and it enables the control of long resonators in Theorem~\ref{theorem on resonance method}. These ideas appear difficult to implement for higher degree $L$-functions but may still be applicable to families of $\mathrm{GL}(1)$ $L$-functions associated with higher order characters.

 \section{Preliminaries and Lemmas} 
\subsection{Setup}

We begin by fixing a finite field $\mathbb{F}_{q}$ of odd cardinality $q = p^{r}$, where $p$ is an odd prime and $r \ge 1$. For simplicity, we assume that $q \equiv 1 \pmod{4}$.  
Let $\mathbb{A} := \mathbb{F}_{q}[t]$ denote the polynomial ring over $\mathbb{F}_{q}$.

For a polynomial $f \in \mathbb{F}_{q}[t]$, we write $\deg(f)$ or $d(f)$ for its degree, and define its norm by $|f| := q^{d(f)}$ (with $|0| := 0$). 

\vspace{1mm}
\noindent
Let $\mathcal{M}_{n,q}$ (or simply $\mathcal{M}_n$) denote the set of monic polynomials of degree $n$, and $\mathcal{P}_{n,q}$ (or simply $\mathcal{P}_n$) the set of monic irreducible polynomials of degree $n$.  
We put
\[
\mathcal{M} := \bigcup_{n \ge 1} \mathcal{M}_n, \quad 
\mathcal{P} := \bigcup_{n \ge 1} \mathcal{P}_n, \quad 
\mathcal{M}_{\le n} := \bigcup_{m \le n} \mathcal{M}_m.
\]

For any real number $r \ge 1$, the generalized divisor function $d_r(f)$ is defined on prime powers by
\begin{equation}\label{generalized_divisor}
d_r(P^a) := \frac{\Gamma(r + a)}{\Gamma(r)\, a!},
\end{equation}
and extended to all monic polynomials multiplicatively.

\vspace{1mm}
\noindent
The Prime Polynomial Theorem (see \cite[Theorem~2.2]{ROS}) asserts that
\begin{equation}\label{prime poly th}
\pi_q(n) := |\mathcal{P}_n|
= \frac{q^n}{n} + O\!\left(\frac{q^{n/2}}{n}\right).
\end{equation}
Finally, the zeta function of $\mathbb{A}$ is given by
\[
\zeta_{\mathbb{A}}(s)
:= \sum_{f \in \mathcal{M}} \frac{1}{|f|^{s}}
= \prod_{P \in \mathcal{P}} \left(1 - |P|^{-s}\right)^{-1},
\qquad \operatorname{Re}(s)>1.
\]

\subsection{Quadratic Dirichlet character and properties of their $L$-functions} For a monic irreducible polynomial $P$, the quadratic residue symbol $\left(\frac{f}{P} \right)$ is defined by 
\begin{align*}
\left( \frac{f}{P}\right)= 
\left\{
\begin{array}
[c]{ll}
1 & \text{if\, $f$ is a square\;} (\text{mod}\,\,  P),\,\, P\nmid f \\
-1 & \text{if\, $f$ is not a square\;} (\text{mod}\,\, P),\,\, P\nmid f\\
0 & \text{if \; } P\mid f.
\end{array}
\right.
\end{align*}
The quadratic Dirichlet character $\chi_{D}$ is defined by $\chi_{D}(f)= \left(\frac{f}{D} \right)$ and the associated $L$-function by
\begin{align*}
L(s,\chi_{D})=\sum_{f\in \mathcal{M}} \frac{\chi_{D}(f)}{|f|^{s}}=\prod_{P\in \mathcal{P}}\left(1-\chi_{D}(P)\,|P|^{-s} \right)^{-1},\;\; \operatorname{Re}(s)>1.
\end{align*} 
Using the change of variable $u=q^{-s}$, we have  
\begin{align*}
\mathcal{L}(u,\chi_{D})=\sum_{f\in \mathcal{M} } \chi_{D}(f)\, u^{d(f)}=\prod_{P\in\mathcal{P}}\left(1-\chi_{D}(P)\,u^{d(P)} \right)^{-1},\;\;\, |u|<1/q .
\end{align*}
\noindent
By [\cite{ROS}, Proposition $4.3$], we see that if $n\geq d(D)$ then $$\sum_{f\in \mathcal{M}_{n}}\chi_{D}(f)=0 .$$ It implies that $\mathcal{L}(u,\chi_{P})$ is a polynomial of degree at most $d(D)-1$. From \cite{Rud}, $\mathcal{L}(u, \chi_D)$ has a trivial zero at $u=1$ if and only if $d(D)$ is even. This allows us to define the completed $L$-function as
\[
L(s,\chi_{D})=\mathcal{L}(u, \chi_D)=(1-u)^{\lambda}\mathcal{L}^{*}(u, \chi_P)=(1-q^{-s})^{\lambda}{L}^{*}(s, \chi_D),
\]
where 
\begin{align}\label{lambda}
\lambda= 
\left\{
\begin{array}
[c]{ll}
1, & \;\text{if\, $d(D)$ even}, \\
0, &\; \text{if\, $d(D)$ odd},
\end{array}
\right.
\end{align}
and $\mathcal{L}^*(u, \chi_D)$ is a polynomial of degree 
\begin{align}\label{genus}
2g=d(D)-1-\lambda
\end{align}
satisfying the functional equation 
\[
\mathcal{L}^*(u, \chi_D)=(qu^2)^{g}\mathcal{L}^*\left(\frac{1}{qu}, \chi_D\right).
\] 
The Riemann hypothesis for curve over finite fields, established by Weil \cite{WEIL}, asserts that all the non-trivial zero of $\mathcal{L}^{*}(u, \chi_{D})$ are lie on the circle $|u|=q^{-1/2}$. Letting $e(x) := e^{2 \pi i x}$, we then write
\begin{align*}\label{20250908_17:40}
\mathcal{L}^*(u,\chi_{D})= \prod_{j=1}^{2g} \left(1 -  \frac{u}{u_j}\right)\,,
\end{align*}
where $u_j := q^{-1/2} e(\theta_j)$ are the zeros of $\mathcal{L}^*(u,\chi_{D})$.
\subsection{Orthogonality of characters over the family}
The following estimate describes the mean value of characters over the hyperelliptic ensemble when evaluated through square polynomials, see \cite[Lemma 3.7]{BF}. 
\begin{lemma}\label{character sum over square}
	For $f\in \mathcal{M}$, one has
	\begin{equation*}
	\frac{1}{|\mathcal{H}_{n}|}\sum_{D\in \mathcal{H}_{n}}\chi_{D}(f^{2})\; =\prod_{\substack{P\in\mathcal{P}\\P\mid f}}\left(1+\frac{1}{|P|} \right)^{-1} \; +\; O(|\mathcal{H}_n|^{-1}). 
	\end{equation*}
\end{lemma}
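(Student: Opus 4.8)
The statement is \cite[Lemma~3.7]{BF}; here is the plan of proof I would follow.

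The first step is to recast the sum as a counting problem. Since $\chi_{D}(f^{2})=\left(\tfrac{f}{D}\right)^{2}$ equals $1$ when $\gcd(f,D)=1$ and $0$ otherwise, the inner sum is exactly the number of monic squarefree polynomials $D$ of degree $n$ with $\gcd(D,f)=1$. Writing $g$ for the radical of $f$ (so that $\gcd(D,f)=1\iff\gcd(D,g)=1$), I would encode this count in the generating series in $u=q^{-s}$,
\[
\sum_{n\ge 0}\#\bigl\{D\in\mathcal{M}_{n}:\ D\ \text{squarefree},\ \gcd(D,g)=1\bigr\}\,u^{n}
=\prod_{P\nmid g}\bigl(1+u^{d(P)}\bigr)
=\frac{1-qu^{2}}{1-qu}\,G_{g}(u),
\]
where $G_{g}(u):=\prod_{P\mid g}\bigl(1+u^{d(P)}\bigr)^{-1}$; here I used $\prod_{P}(1+u^{d(P)})=\prod_{P}\tfrac{1-u^{2d(P)}}{1-u^{d(P)}}=\tfrac{1-qu^{2}}{1-qu}$, which follows from the Euler product $\prod_{P}(1-u^{d(P)})^{-1}=\sum_{f\in\mathcal{M}}u^{d(f)}=(1-qu)^{-1}$. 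Note that $G_{g}$ is a rational function, analytic on the open unit disc $|u|<1$, all of whose poles lie on the circle $|u|=1$.

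The second step is to extract the coefficient of $u^{n}$. Using the partial fraction decomposition $\tfrac{1-qu^{2}}{1-qu}=u+\tfrac1q+\tfrac{1-1/q}{1-qu}$, one sees that $\tfrac{1-qu^{2}}{1-qu}G_{g}(u)$ has a single singularity inside $|u|<1$, a simple pole at $u=1/q$, whose residue supplies, for $n\ge 2$, the contribution
\[
\Bigl(1-\tfrac1q\Bigr)q^{n}\,G_{g}\!\left(\tfrac1q\right)
=\Bigl(1-\tfrac1q\Bigr)q^{n}\prod_{P\mid f}\Bigl(1+\tfrac1{|P|}\Bigr)^{-1}
=|\mathcal{H}_{n}|\prod_{P\mid f}\Bigl(1+\tfrac1{|P|}\Bigr)^{-1},
\]
which is precisely the asserted main term (the ranges $n\le 1$ being immediate). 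The remaining part of $[u^{n}]$, governed by the singularities of $G_{g}$ on $|u|=1$, I would bound by shifting the contour in $[u^{n}]F(u)=\tfrac{1}{2\pi i}\oint F(u)\,u^{-n-1}\,du$ to a circle $|u|=r$ with $r<1$ fixed and bounded away from those poles; the integral is then small compared with $q^{n}$, and after dividing by $|\mathcal{H}_{n}|\asymp q^{n}$ one obtains the stated error $O(|\mathcal{H}_{n}|^{-1})$. An entirely elementary variant of the same computation is also available: expand $\mathbf{1}_{D\ \mathrm{sqfree}}=\sum_{b^{2}\mid D}\mu(b)$ and $\mathbf{1}_{\gcd(D,g)=1}=\sum_{e\mid\gcd(D,g)}\mu(e)$, reduce the count to $\sum_{b,e}\mu(b)\mu(e)\,q^{\,n-2d(b)-d(e)}$ using the \emph{exact} identity $|\mathcal{M}_{m}|=q^{m}$, and complete the two inner sums; their product rearranges, via $\zeta_{\mathbb{A}}(2)=(1-q^{-1})^{-1}$ and $\tfrac{1-|P|^{-1}}{1-|P|^{-2}}=(1+|P|^{-1})^{-1}$, into $(1-q^{-1})\prod_{P\mid f}(1+|P|^{-1})^{-1}$, the error being the tails of geometrically convergent series.

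The step I expect to be the main obstacle is the \emph{uniformity in $f$} of the error term. The trivial estimate for $G_{g}$ on the circle $|u|=r$ — equivalently, the $2^{\omega(f)}$ terms of the Möbius expansion together with the sizes of the residues on $|u|=1$ — worsens as the number of distinct prime factors of $f$ grows, and it is worst when several of those primes share a common degree, in which case $G_{g}$ has a high-order pole on $|u|=1$ and the naive coefficient bound picks up a power of $n$. Handling this requires choosing the contour radius with $1-r$ of size comparable to $\omega(f)/n$ (which turns the loss into a harmless factor), or, in the elementary version, tracking the dependence on $\omega(f)$ and $d(f)$ through the geometric tails. This bookkeeping — the only genuinely technical part of the argument — is carried out in \cite[Lemma~3.7]{BF}.
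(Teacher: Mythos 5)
Your reduction to counting squarefree $D$ coprime to $f$, the generating-series identity $\sum_{n}N(n;g)u^{n}=\frac{1-qu^{2}}{1-qu}G_{g}(u)$, and the extraction of the main term from the simple pole at $u=1/q$ are all correct, and this is indeed the mechanism behind~\cite[Lemma~3.7]{BF}. The step that does not close is the error term, and the repair you propose does not work: shrinking the contour radius so that $1-r\asymp\omega(f)/n$ does \emph{not} make the loss ``harmless.'' After peeling off the pole at $u=1/q$, the remainder $\tilde{F}(u)=F(u)-\frac{(1-1/q)G_{g}(1/q)}{1-qu}$ is a rational function all of whose poles lie on $|u|=1$; if $f$ has $m\ge 2$ distinct prime factors of a common degree $d$, then $(1+u^{d})^{m}$ divides the denominator, $\tilde{F}$ has poles of order $m$, and $[u^{n}]\tilde{F}\asymp\binom{n+m-1}{m-1}\asymp n^{m-1}$. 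Your contour estimate merely reproduces this: with $1-r\asymp 1/n$ one gets $\max_{|u|=r}|\tilde{F}|\cdot r^{-n}\asymp n^{m}$, a polynomial loss in $n$, not an $O(1)$ bound.

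To see this is a genuine obstruction and not merely a lossy estimate, take $q=3$ and $f=t(t+1)$. Then $G_{g}(u)=(1+u)^{-2}$ and
\[
F(u)=\frac{1-3u^{2}}{(1-3u)(1+u)^{2}}=\frac{3/8}{1-3u}+\frac{9/8}{1+u}-\frac{1/2}{(1+u)^{2}},
\]
so
\[
\sum_{D\in\mathcal{H}_{n}}\chi_{D}(f^{2})=[u^{n}]F=\frac{3}{8}\,3^{n}+(-1)^{n}\Bigl(\frac{9}{8}-\frac{n+1}{2}\Bigr),
\]
and since the main term $|\mathcal{H}_{n}|\prod_{P\mid f}(1+|P|^{-1})^{-1}$ is exactly $\frac{3}{8}\cdot 3^{n}$, the un-normalised error grows like $n/2$; after dividing by $|\mathcal{H}_{n}|$ it is $\asymp n\,|\mathcal{H}_{n}|^{-1}$, not $O(|\mathcal{H}_{n}|^{-1})$. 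So the error term in Lemma~\ref{character sum over square} is stated more sharply than your argument (or the truth, once $f$ has repeated prime degrees) can deliver; the precise statement in~\cite[Lemma~3.7]{BF} carries an additional factor of roughly $q^{n/2}d(f)$ in the un-normalised error, which is what a careful contour or M\"obius bookkeeping actually yields, and which suffices for every application in the paper. You correctly flagged uniformity in $f$ as the delicate point; the specific resolution you offered, however, does not produce the stated bound.
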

The following lemma strengthens \cite[Lemma~3.5]{BF2} by improving the $\varepsilon$-dependence to the quantity $|\ell_1|^{\varepsilon}$.
\begin{lemma}\label{character sum over nonsquare}
	Let $\ell=\ell_1 \ell_2^2\neq \square\in \mathbb{F}_q[t]$ with $\ell_1$ square-free. For any $\varepsilon>0$, we have
	\[
	\sum_{D\in \mathcal{H}_n}\chi_D(\ell)\ll  q^{(1/2+\varepsilon)n}\exp\left((d(\ell_1))^{1-\varepsilon}\right)\prod_{P\mid \ell_2}\left(1+|P|^{-1/2-\varepsilon}\right).
	\]	
	
\end{lemma}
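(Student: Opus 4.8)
The plan is to turn the average over the hyperelliptic ensemble into a statement about the coefficients of an $L$-polynomial attached to a single quadratic character modulo $\ell_1$, and then to marry the Riemann Hypothesis for curves over $\mathbb{F}_q$ (Weil's theorem, used already in the excerpt) with a Cauchy coefficient extraction on a circle of radius slightly inside $q^{-1/2}$. First I would apply quadratic reciprocity in $\mathbb{F}_q[t]$ — legitimate since $q\equiv1\pmod{4}$ is fixed — to rewrite $\chi_D(\ell)=\bigl(\tfrac{\ell_1}{D}\bigr)\bigl(\tfrac{\ell_2}{D}\bigr)^2$ as $\bigl(\tfrac{D}{\ell_1}\bigr)$ when $\gcd(D,\ell_2)=1$ and as $0$ otherwise. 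With $\psi:=\bigl(\tfrac{\cdot}{\ell_1}\bigr)$, a \emph{primitive non-principal} quadratic character modulo $\ell_1$ (non-principal precisely because $\ell\neq\square$ forces $\ell_1\neq1$), the lemma reduces to bounding $\sum_{D\in\mathcal{M}_n,\ \mu^2(D)=1,\ \gcd(D,\ell_2)=1}\psi(D)$.

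Next I would pass to the generating series in $u=q^{-s}$. Since the Euler factors with $P\mid\ell_1$ are trivial,
\[
\mathcal{F}(u):=\sum_{n\ge0}\Bigl(\sum_{D\in\mathcal{H}_n}\chi_D(\ell)\Bigr)u^n=\prod_{P\nmid\ell_2}\bigl(1+\psi(P)u^{d(P)}\bigr)=\prod_{P\nmid\ell_1\ell_2}\bigl(1+\psi(P)u^{d(P)}\bigr),
\]
and the identity $1+\psi(P)u^{d(P)}=(1-u^{2d(P)})/(1-\psi(P)u^{d(P)})$ for $P\nmid\ell_1$, together with completion of the Euler products, yields the factorisation $\mathcal{F}(u)=\mathcal{L}(u,\psi)\,G(u)$, where $\mathcal{L}(u,\psi)=\prod_P(1-\psi(P)u^{d(P)})^{-1}$ is the $L$-polynomial of $\psi$ (degree $\le d(\ell_1)-1$) and
\[
G(u)=\frac{1-qu^2}{\prod_{P\mid\ell_1}(1-u^{2d(P)})\ \prod_{P\mid\ell_2,\,P\nmid\ell_1}\bigl(1+\psi(P)u^{d(P)}\bigr)}
\]
is holomorphic in $|u|<1$. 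By Weil's theorem the coefficients $S(k)=\sum_{C\in\mathcal{M}_k}\psi(C)=[u^k]\mathcal{L}(u,\psi)$ satisfy $|S(k)|\le\binom{d(\ell_1)-1}{k}q^{k/2}$, and of course also $|S(k)|\le q^{k}$.

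I would then extract the coefficient by Cauchy's formula on $|u|=r$ with $r=q^{-1/2-\varepsilon}$:
\[
\Bigl|\sum_{D\in\mathcal{H}_n}\chi_D(\ell)\Bigr|\le r^{-n}\Bigl(\max_{|u|=r}|\mathcal{L}(u,\psi)|\Bigr)\Bigl(\max_{|u|=r}|G(u)|\Bigr).
\]
Here $r^{-n}=q^{(1/2+\varepsilon)n}$ supplies the main factor; bounding $|G|$ on $|u|=r$ by $|1+\psi(P)u^{d(P)}|\ge1-r^{d(P)}$ and $|1-u^{2d(P)}|\ge1-r^{2d(P)}$, and using that $\sum_P|P|^{-1-2\varepsilon}<\infty$ forces the $P\mid\ell_1$ denominator to contribute only $O_{q,\varepsilon}(1)$, leaves (up to the implied constant) the factor $\prod_{P\mid\ell_2}(1+|P|^{-1/2-\varepsilon})$; this already upgrades \cite[Lemma~3.5]{BF2}, where the modulus enters through the coarser $|\ell|^{\varepsilon}$. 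It remains to bound $\max_{|u|=r}|\mathcal{L}(u,\psi)|$. Applying the triangle inequality directly only gives $(1+q^{-\varepsilon})^{d(\ell_1)-1}$, an exponential loss in $d(\ell_1)$; instead, writing $\sum_{D\in\mathcal{H}_n}\chi_D(\ell)=\sum_{k\le d(\ell_1)-1}S(k)g_{n-k}$ with $g_m=[u^m]G(u)$ and combining the Weil and trivial bounds for $S(k)$, I would split the $k$-range at a threshold $K\asymp_\varepsilon\log_q d(\ell_1)$, estimating $k\le K$ via $\binom{d(\ell_1)-1}{k}\le d(\ell_1)^{k}/k!$ and $k>K$ via the Weil bound with an optimised auxiliary radius, so that the loss is reduced to the sub-polynomial quantity $\exp\bigl((d(\ell_1))^{1-\varepsilon}\bigr)\ll_\varepsilon|\ell_1|^{\varepsilon}$.

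The main obstacle is precisely this last estimate. The only structural input on the inverse roots of $\mathcal{L}(u,\psi)$ is Weil's bound, and used bluntly it costs an exponential factor in $d(\ell_1)$; squeezing it down to $\exp((d(\ell_1))^{1-\varepsilon})$ — which is what is needed so that, together with the convergent Euler product in $\ell_2$, the estimate survives being summed against the Dirichlet coefficients of large powers of $L(1,\chi_D;N)$ in Theorem~\ref{main theorem} and against the resonator weights in Theorem~\ref{theorem on resonance method} — demands a careful calibration of the threshold and the contour radius against the length $d(\ell_1)$ of the polynomial. With that estimate in hand, the three steps combine to the asserted bound.
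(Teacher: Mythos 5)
Your overall skeleton—Cauchy extraction on $|u|=q^{-1/2-\varepsilon}$, factorising the generating series so that the $\ell_2$-primes contribute $\prod_{P\mid\ell_2}(1+|P|^{-1/2-\varepsilon})$ and the $\ell_1$-primes a bounded Euler factor—matches the paper's. The genuine gap is the final step, where you propose to bound $\max_{|u|=q^{-1/2-\varepsilon}}|\mathcal{L}(u,\psi)|$ by summing coefficient bounds $|S(k)|\le\min\bigl(\binom{d(\ell_1)-1}{k}q^{k/2},q^k\bigr)$. This cannot give $\exp\bigl((d(\ell_1))^{1-\varepsilon}\bigr)$. The obstruction is not the threshold calibration but a hard information barrier: the only input you feed into $S(k)$ is the location of the inverse roots (Weil), and that information is compatible with the extremal polynomial $(1-uq^{1/2})^{d(\ell_1)-1}$, whose sup on $|u|=q^{-1/2-\varepsilon}$ is exactly $(1+q^{-\varepsilon})^{d(\ell_1)-1}$, genuinely exponential in $d(\ell_1)$. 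Concretely, $\sum_k\min\bigl(\binom{d(\ell_1)-1}{k}q^{k/2},q^k\bigr)q^{-k(1/2+\varepsilon)}$ is still dominated by the terms $k\asymp d(\ell_1)$, where the binomial factor wins, and no auxiliary-radius optimisation of the Cauchy estimate for $S(k)$ escapes this since such an optimisation reproduces the Weil bound itself.

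What the paper does instead is apply the trivial-versus-RH dichotomy to the Dirichlet series of $\ln\mathcal{L}(u,\chi_{\ell_1})$ rather than to the coefficients of $\mathcal{L}$. Writing $\ln\mathcal{L}(u,\chi_{\ell_1})=\sum_m c_m u^m$ with $c_m=\tfrac1m\sum_{d(P^j)=m}d(P)\chi_{\ell_1}(P^j)$, one has the trivial bound $|c_m|\le q^m/m$ and the explicit-formula bound $|c_m|\ll d(\ell_1)\,q^{m/2}/m$, with the crossover at $m\asymp\log_q d(\ell_1)$; summing each piece on $|u|=q^{-1/2-\varepsilon}$ gives $O\bigl(d(\ell_1)^{1-\varepsilon}\bigr)$, and exponentiating yields the needed $\exp\bigl((d(\ell_1))^{1-\varepsilon}\bigr)$. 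This is precisely the content of the Borel--Carath\'eodory plus Perron steps in the paper's proof of Lemma~\ref{character sum over nonsquare} (equations \eqref{bound for log}--\eqref{approx for log}): the Perron formula truncates the Dirichlet series at a short length $y$ whose remainder is controlled by the Borel--Carath\'eodory bound. The logarithm is essential because it converts a degree-$d(\ell_1)$ polynomial, whose coefficients individually saturate the Weil bound, into a Dirichlet series whose coefficients decay fast enough past the $\log d(\ell_1)$ crossover for the total to be subpolynomial. To repair your proposal you would need to replace the coefficient splitting of $\mathcal{L}$ by this dichotomy applied to $\ln\mathcal{L}$.
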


\begin{proof}
	
	{\it {Bounding $\ln \mathcal{L}(u, \chi_D)$ using information about location of its zeros.}}
		Recall that for a given quadratic Dirichlet character $\chi$ with conductor $M \in \mathbb{F}_q[t]$, 
	\[
	\mathcal{L}(u, \chi) = \sum_{f \in \mathcal{M}} \chi(f) u^{\deg(f)}, \quad |u| < 1/q.
	\]
If $|u| < q^{-2}$, then $\ln \mathcal{L}(u, \chi) \ll 1$. 
For $\sigma_0 > \sigma > \tfrac{1}{2}$, consider the annulus 
$
q^{-2} < |u| \le q^{-\sigma}.
$
Observe that, since $\mathcal{L}(u, \chi)$ has no zeros in this region, $\ln \mathcal{L}(u, \chi)$ is analytic there. 
On the other hand, we know that
	\[
	\mathcal{L}(u, \chi) = \prod_{j=1}^{\deg(M)-1} \left(1 - u q^{1/2} e(-\theta_j)\right).
	\]
It follows that $
	|\mathcal{L}(u, \chi)| \le 2^{\deg(M)-1},
	$
	and consequently,
	\[
	|\ln \mathcal{L}(u, \chi)| \le 2 \deg(M).
	\]
	By the Borel--Carathéodory theorem, since $\ln \mathcal{L}(u, \chi)$ is analytic inside the closed disc $|u| \le q^{-\sigma_0}$, for all $|u| \leq q^{-\sigma}<q^{-\sigma_0}$, we get
	\[
	|\ln \mathcal{L}(u, \chi)| \le 
	\frac{2 q^{-\sigma}}{q^{-\sigma_0} - q^{-\sigma}} 
	\max_{|u| = q^{-\sigma_0}} \operatorname{Re}(\ln \mathcal{L}(u, \chi))
	+ \frac{q^{-\sigma_0} + q^{-\sigma}}{q^{-\sigma_0} - q^{-\sigma}}
	|\ln \mathcal{L}(0, \chi)|.
	\]
	This lead us to bound
	\begin{align}\label{bound for log}
	\ln \mathcal{L}(u, \chi) \ll \frac{\deg(M)}{q^{\sigma-\sigma_0} - 1}. 
	\end{align}
	{\it {Approximate $\ln \mathcal{L}(u, \chi)$ by a short Dirichlet polynomial.}}
	For $|u|\leq q^{-1-\varepsilon}$, one express 
	\[
	\ln \mathcal{L}(u, \chi) = \sum_{f \in \mathcal{M}} \frac{\Lambda(f) \chi(f)}{d(f)} u^{d(f)}.
	\]
	By Perron's formula \cite[page 79]{BF2}, for any $y \geq 2$,
	\[
	\sum_{\substack{f \in \mathcal{M}_{\leq y}}} \frac{\Lambda(f) \chi(f)}{d(f)} u^{d(f)} 
	= \frac{1}{2\pi i} \int_{|v| = R} \ln \mathcal{L}(uv, \chi) \frac{dv}{v^{y+1}(1 - v)},
	\]
where $R = q^{-\sigma_0}$ and $|u| = q^{-\sigma}$ for $\sigma > \sigma_0 \ge \tfrac{1}{2}$. 
Since $\mathcal{L}(u, \chi)$ has no zeros inside the disk $|v| \le q^{\sigma - \sigma_0}$, we may shift the contour of integration from $|v| = R$ to $|v| = R_1 = q^{\sigma - \sigma_0}$. 
In doing so, we cross a simple pole at $v = 1$, and by applying \eqref{bound for log}, we obtain
	\begin{align}\label{approx for log}
	\ln \mathcal{L}(u, \chi) &= 
	\sum_{\substack{f \in \mathcal{M}_{\leq y}}} \frac{\chi(f) \Lambda(f)}{d(f)} u^{d(f)}
	+ O \left( \int_{|v| = R_1} \ln \mathcal{L}(uv, \chi) \frac{dv}{v^{y+1}(1 - v)}\right) \nonumber\\
	&=\sum_{\substack{f \in \mathcal{M}_{\leq y}}} \frac{\chi(f) \Lambda(f)}{d(f)} u^{d(f)}+O \!\left( 
	\frac{\deg(M)}{q^{\sigma - \sigma_0}(q^{\sigma-\sigma_0}-1)}q^{-y(\sigma-\sigma_0)}
	\right).
	\end{align}
	{\it{Bounding character sum over non-square.}} Using Perron's formula again, we get
	\[
	\sum_{D\in \mathcal{H}_n}\chi_D(\ell)=\frac{1}{2\pi i}\int_{|u|=r}\mathcal{G}(u, \chi_\ell)\frac{du}{u^{n+1}},
	\]
	where we pick $r=q^{-1/2-\varepsilon}$ and 
	\[\mathcal{G}(u, \chi_\ell)=\sum_{\substack{D\in \mathcal{M}\\ D \text{ square-free}}} \chi_{\ell}(D)u^{d(D)}=\mathcal{L}(u, \chi_\ell)(1-qu^2)\prod_{P\mid \ell}\left(1-u^{2d(P)}\right)^{-1}.
	\]
	Given that $\ell=\ell_1 \ell_2^2$ with $\ell_1$ square-free, we express
	\[
	\mathcal{L}(u, \chi_{\ell})=\mathcal{L}(u, \chi_{\ell_1})\prod_{\substack{P\nmid \ell_1\\P\mid \ell_2}}\left(1-\chi_{\ell_1}(P)u^{d(P)}\right).
	\]
	Choosing $y=d(\ell_1)$, from \eqref{approx for log}, we find that for $|u|=q^{-1/2-\varepsilon}$,
	\[
	\mathcal{L}(u, \chi_{\ell_1})\ll \exp\left((d(\ell_1))^{1-\varepsilon}\right).
	\]
	From the above estimates, we conclude that
	\[
	\sum_{D\in \mathcal{H}_n}\chi_D(\ell)\ll_\varepsilon \int_{|u|=q^{-1/2-\varepsilon}}|\mathcal{G}(u, \chi_\ell)|\frac{|du|}{|u|^{n+1}}\ll q^{(1/2+\varepsilon)n} \exp\left((d(\ell_1))^{1-\varepsilon}\right)\prod_{P\mid \ell_2}\left(1+|P|^{-1/2-\varepsilon}\right),
	\]
	which finishes the proof.
\end{proof}

\subsection{Truncating the Euler product of $L(1, \chi_D)$}
We show that $L(1, \chi_D)$ can be well approximated by its short Euler product for almost all $D \in \mathcal{H}_n$. Let 
\begin{align}\label{short euler product}
L(1,\chi_D; M):= \prod_{d(P)\le M}\left(1-\frac{\chi_D(P)}{|P|}\right)^{-1}=\sum_{\substack{f\in \mathcal{M}\\ P\mid f \implies d(P)\leq M}}\frac{\chi_D(f)}{|f|}.
\end{align} 
	Let $M:=3\log n$. From \cite[Lemma 2.2]{Lumley}, for $D\in \mathcal{H}_n$, we have
	\begin{align}\label{asym for L(1)}
	L(1,\chi_{D})=L(1,\chi_D; M)\left(1+O\left(\frac{1}{n^{1/2}\log n}\right)\right).
	\end{align}
The following lemma provides a better approximation on the truncation length and the associated error chosen optimally for the proof of Theorem~\ref{main theorem}.
\begin{lemma}\label{full l to its truncation}
For sufficiently large $n$,
\[
L(1, \chi_D)=L(1, \chi_D; N)\left(1+O\left(\frac{1}{f(n)\log n}\right)\right)
\]
for all but at most $q^{\frac{7n}{10}}$ elements $D\in \mathcal{H}_n$, where $N=\log n+\log_2 n+3\log f(n)$, and $f(n) \to \infty$ arbitrarily slowly as $n \to \infty$, with $f(n) \ll \log n$.
	\end{lemma}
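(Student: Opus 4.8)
The plan is to compare the full Euler product $L(1,\chi_D)$ with the truncated product $L(1,\chi_D;N)$ by controlling the contribution of the ``medium'' primes $N < d(P) \le M$, where $M = 3\log n$ is the truncation at which \eqref{asym for L(1)} already gives an extremely good approximation. Taking logarithms, it suffices to show that
\[
\sum_{N < d(P) \le M} \frac{\chi_D(P)}{|P|} = O\!\left(\frac{1}{f(n)\log n}\right)
\]
for all but at most $q^{7n/10}$ polynomials $D \in \mathcal{H}_n$; the higher prime-power terms $\chi_D(P^k)/(k|P|^k)$ with $k \ge 2$ are trivially $O(q^{-N}) = o(1/(f(n)\log n))$ since $q^{N} = n \log n \, f(n)^3$. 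So the whole game reduces to a second-moment (Chebyshev) estimate for the Dirichlet polynomial $S(D) := \sum_{N < d(P)\le M} \chi_D(P)|P|^{-1}$ over the ensemble.

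The key steps, in order. \textbf{Step 1.} Write $|\mathcal{H}_n|^{-1}\sum_{D\in\mathcal{H}_n} |S(D)|^2 = \sum_{N < d(P_1), d(P_2) \le M} \frac{1}{|P_1||P_2|}\cdot \frac{1}{|\mathcal{H}_n|}\sum_{D} \chi_D(P_1 P_2)$. \textbf{Step 2.} Split the inner sum according to whether $P_1 P_2$ is a square or not. The diagonal terms $P_1 = P_2$ contribute, via Lemma~\ref{character sum over square}, an amount $\le \sum_{N < d(P)\le M} |P|^{-2}(1+O(|\mathcal{H}_n|^{-1})) \ll q^{-N} \ll 1/(n\log n\, f(n)^3)$, which is already much smaller than $(1/(f(n)\log n))^2$. \textbf{Step 3.} For the off-diagonal terms, $\ell = P_1 P_2 \neq \square$ with $\ell_1 = P_1 P_2$ square-free and $\ell_2 = 1$, so Lemma~\ref{character sum over nonsquare} gives $\sum_D \chi_D(\ell) \ll q^{(1/2+\varepsilon)n}\exp((d(P_1)+d(P_2))^{1-\varepsilon}) \ll q^{(1/2+\varepsilon)n}\exp((6\log n)^{1-\varepsilon})$, and dividing by $|\mathcal{H}_n| \asymp q^n$ yields a saving of $q^{-(1/2-\varepsilon)n}$ up to a subexponential factor. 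Summing over the $O((q^M/M)^2) = O(q^{6\log n})$ pairs and using $1/(|P_1||P_2|) \le q^{-2N}$ keeps everything bounded by $q^{-(1/2 - \varepsilon)n + o(n)}$, which is negligible. \textbf{Step 4.} Conclude $|\mathcal{H}_n|^{-1}\sum_D |S(D)|^2 \ll q^{-N} \ll 1/(n\log n)$, and apply Chebyshev: the number of $D$ with $|S(D)| > 1/(f(n)\log n)$ is at most $(f(n)\log n)^2 |\mathcal{H}_n|^{-1}\sum|S(D)|^2 \cdot |\mathcal{H}_n| \ll f(n)^2(\log n)^2 \cdot q^{-N} \cdot q^n = f(n)^2 (\log n)^2 \cdot \frac{q^n}{n\log n\, f(n)^3} = \frac{q^n \log n}{n\, f(n)}$. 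Since $f(n)\to\infty$ and $\log n / (n f(n)) = o(q^{-3n/10})$ for large $n$, this is $\le q^{7n/10}$. \textbf{Step 5.} Finally, on the complementary set, exponentiate: $\ln L(1,\chi_D) - \ln L(1,\chi_D;N) = S(D) + O(q^{-N})$, so $L(1,\chi_D)/L(1,\chi_D;N) = \exp(O(1/(f(n)\log n))) = 1 + O(1/(f(n)\log n))$, combining with \eqref{asym for L(1)} to absorb the tiny $M$-truncation error.

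The main obstacle is verifying that the off-diagonal bound from Lemma~\ref{character sum over nonsquare} truly dominates the loss from summing over $\asymp q^{6\log n} = n^6$ pairs of primes together with the $\exp((6\log n)^{1-\varepsilon})$ factor — one needs $q^{(1/2+\varepsilon)n} \cdot n^6 \cdot \exp((6\log n)^{1-\varepsilon}) = o(q^n \cdot q^{-N})$, i.e. $q^{(1/2+\varepsilon)n} = o(q^{n})$ after absorbing polynomial and subexponential factors, which holds comfortably for any fixed $\varepsilon < 1/2$; the point is simply to be careful that the $\varepsilon$ in Lemma~\ref{character sum over nonsquare} can be taken small and fixed independently of $n$. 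A secondary subtlety is bookkeeping the cutoff $q^{7n/10}$: one must check the Chebyshev count $\frac{q^n\log n}{n f(n)}$ is genuinely below $q^{7n/10}$, which requires only that $n$ be large (the ratio is $q^{3n/10}\log n/(n f(n)) \to \infty$ in the wrong direction — so in fact $\frac{q^n \log n}{n f(n)} \gg q^{7n/10}$, meaning the bound $q^{7n/10}$ in the statement must come from a sharper input). Reconciling this forces one to use the full strength of Lemma~\ref{character sum over nonsquare}: the exceptional count is actually $\ll q^{n} \cdot (\text{second moment})$ and the second moment is $\ll q^{-(1/2-\varepsilon)n + o(n)}$ from the off-diagonal (not merely $q^{-N}$), giving exceptional count $\ll q^{(1/2+\varepsilon)n + o(n)} \cdot f(n)^2(\log n)^2 \ll q^{7n/10}$ for small $\varepsilon$ — so the correct reading is that Step 3's bound, not Step 2's diagonal, governs the exceptional set, and I would present it in that order.
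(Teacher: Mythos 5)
Your plan — control the tail by a second-moment (Chebyshev with exponent $2$) estimate for $S(D):=\sum_{N<d(P)\le M}\chi_D(P)/|P|$ — breaks down at exactly the point you flag in your last paragraph, and the "fix" you sketch there does not repair it. The second moment is an average of $|S(D)|^2 \geq 0$, and its diagonal piece (the $P_1=P_2$ terms, evaluated via Lemma~\ref{character sum over square}) is an intrinsic, essentially positive contribution of size $\asymp \sum_{N<d(P)\le M}|P|^{-2} \asymp q^{-N}/N$. You cannot substitute the off-diagonal bound $q^{-(1/2-\varepsilon)n+o(n)}$ for the whole second moment: the off-diagonal is an \emph{error} on top of the diagonal, not a replacement for it. So the second moment really is $\gg q^{-N}$, and the Chebyshev count is $\gg q^n \cdot (f(n)\log n)^2 \cdot q^{-N} \asymp q^n\log n/(nf(n))$, which decays only polynomially in $n$ and is vastly larger than $q^{7n/10}$. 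In other words, the exceptional set from a second-moment argument has positive (indeed near-full) exponential density, and the lemma as stated simply cannot follow from it.

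What the paper does instead, and what is genuinely needed, is to run Chebyshev with a \emph{very large} moment exponent $2k$, with $k$ nearly linear in $n$ (roughly $k \asymp n/\log f(n)$), together with an additive dyadic splitting of the degree range $[N,M]$ into subintervals $[N_j, N_{j+1}]$ with moment parameters $k_j$ chosen separately on each piece. At the $2k$-th moment the "diagonal" (square) contribution carries a factor $(2k)!/(2^k k!) \cdot (\sum_{A<d(P)\le B}|P|^{-2})^k \ll q^{-kA}(2k/A)^k$, and the extra exponent $k$ is what produces the exponential saving $q^{-\theta n}$ after optimizing $k_j$; simultaneously, the non-square contribution only remains under control because Lemma~\ref{character sum over nonsquare} has the refined $\exp((d(\ell_1))^{1-\varepsilon})$ dependence, which stays subexponential even though the length $2k_j N_{j+1}$ of the implicit polynomial is comparable to $n$. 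None of this is available at $k=1$. So the gap is not a bookkeeping slip in the final step — the whole method needs to be upgraded from a second moment to a family of high moments plus a dyadic decomposition.
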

\begin{proof}
		By using \eqref{asym for L(1)}, we have
		\[
		L(1, \chi_D) = 
	L(1, \chi_D; N)
		\exp \!\bigg(
		\sum_{\substack{N < d(P) \le M}} 
		\bigg(\frac{\chi_D(P)}{|P|} 
		+ O\!\left( \frac{1}{|P|^2} \right)
		\bigg)\bigg)
		\bigg( 1 + O\!\bigg( \frac{1}{n^{\tfrac{1}{2}} \log n} \bigg) \bigg).
		\]
		Next, we identify a density zero subfamily of $\mathcal{H}_n$ for which
		\begin{align}\label{DP bound} 
		\Bigg|\sum_{N < d(P) \le M} \frac{\chi_D(P)}{|P|}\Bigg| \ge \frac{1}{h(n)}
		\end{align}
		holds, where $
		\log n \ll h(n) \to \infty$ as $n \to \infty.$
		It is worth noting that $h(n)$ must grow faster than $\log n$ chosen appropriately later in order for the saddle point analysis to yield an asymptotic formula exhibiting double exponential decay (see Section~\ref{spa}).

		In doing so, if \eqref{DP bound} holds for $q^{\theta n}$ many $D$’s for some $\theta < 1$, then we have
		\[
		L(1, \chi_D) = L(1, \chi_D; N) \left(1 + O\left(\frac{1}{h(n)} + \frac{1}{n^{1/2}\log n}\right)\right)
		\]
		for all but at most $q^{\theta n}$ many $D$’s.
		
		Dyadically partitioning the degrees of irreducible polynomials additively within the interval $[N, M]$, we apply Chebyshev’s inequality to obtain
		\begin{align}\label{chebyshev}
		\#\Bigg\{D \in \mathcal{H}_{n}: \bigg|\sum_{N_j < d(P) \le N_{j+1}} \frac{\chi_D(P)}{|P|}\bigg| 
		\ge \frac{1}{q^{j/4} h(n)}\Bigg\} 
		\le \sum_{D \in \mathcal{H}_{n}} 
		\bigg|\sum_{N_j < d(P) \le N_{j+1}} \frac{\chi_D(P)}{|P|}\bigg|^{2k_j}
		\left(q^{j/4} h(n)\right)^{2k_j},
		\end{align}
		where $k_j \ge 1$ will be chosen optimally later, and
		\[
		N_0:=N, \, \quad N_j = j + N, \qquad \text{ for }
		1\leq j \le \lfloor 3 \log n - N \rfloor.
		\]
	\noindent\textit{Large sieve type estimate.}  
	For given natural numbers $k, A$ and $B$, the discussion above forces us to estimate the following bound:
	\[
	S:= \sum_{D \in \mathcal{H}_{n}} 
	\bigg|\sum_{A < d(P) \le B} \frac{\chi_D(P)}{|P|}\bigg|^{2k} 
	\ll q^{\,n - kA} \left(\frac{k}{A}\right)^{k} 
	+ q^{(1/2 + \varepsilon)n} 
	\exp\!\left((2kB)^{1 - \varepsilon}\right) 
	\ln^{2k}\!\left(\frac{B}{A}\right).
	\]
	To prove this, we expand $2k$-th power to obtain the simplified expression  
		\[
		S = \sum_{P_1, \ldots, P_{2k}} \frac{1}{|P_1| \cdots |P_{2k}|} 
		\sum_{D \in \mathcal{H}_{n}} \chi_D(P_1 \cdots P_{2k}).
		\]
		Consider first the case when \( P_1 \cdots P_{2k} = \square \).  
		Using Lemma~\ref{character sum over square} and noting that, in the optimal situation, there are \( k \) distinct primes that can be paired, we deduce
		\[
		S \ll q^{n} \frac{(2k)!}{2^k k!}
		\bigg(\sum_{A < d(P) \le B} \frac{1}{|P|^2}\bigg)^{\!k}
		\ll q^{\,n - kA} \left(\frac{2k}{A}\right)^{\!k},
		\]
		where Stirling’s formula and \eqref{prime poly th} have been used.
		
		 Now consider the case \( P_1 \cdots P_{2k} \neq \square \).  
		Applying Lemma~\ref{character sum over nonsquare} together with the prime polynomial theorem (\eqref{prime poly th} repeatedly, we bound
		\begin{align*}
		S &\ll q^{(1/2 +\varepsilon)n} \exp\left((2kB)^{1-\varepsilon}\right) \bigg(\sum_{A < d(P) \le B} \frac{1}{|P|}\bigg(1+\frac{1}{|P|^{1/2}}\bigg)\bigg)^{2k}\\
		&\ll q^{(1/2+\varepsilon)n} \exp\left((2kB)^{1-\varepsilon}\right) \ln^{2k}\bigg(\frac{B}{A}\exp\bigg(O\bigg(\frac{1}{\ln A}\bigg)\bigg)\bigg),
		\end{align*}
	which establishes the desired bound for $S$. Substituting the contribution of the first term in the bound for $S$ into the moments on the right-hand side of the sum in~\eqref{chebyshev}, we have
	\begin{align*}
	 \eqref{chebyshev}&\ll q^{n-k_j N_j} \left(\frac{2k_j}{N_j}\right)^{k_j} \left(q^{j/4} h(n)\right)^{2k_j}\leq  q^{n-k_j N_j + \log \left(\frac{2k_j}{N_j}\right) + 2k_j\log \left(q^{j/4} h(n)\right)}\\
	&\le  q^{n+k_j(-\log f(n) - j/2)}\le q^{(1-\theta)n}
	\end{align*}
	 for some $\theta \in (0, 1)$, where we have chosen $h(n) = f(n) \log n$ and $
	 k_j = \frac{n\theta}{(j/4)+\log f(n)}$ with $f(n)\ll \log n$.
	 
By recalling the choice of $k_j$ and $h(n)$, the contribution from the second term of $S$ to \eqref{chebyshev} is estimated as 
	 \begin{align*}
	 \eqref{chebyshev}&\ll q^{(1/2+\varepsilon)n} \exp \left((2k_j N_j)^{1-\varepsilon}\right) \ln^{2k_j} \left(\frac{N_{j+1}}{N_j}\exp\left(O\left(\frac{1}{\ln N_j}\right)\right)\right) \left(q^{j/4} h(n)\right)^{2k_j}\\
	 &\ll q^{(1/2+\varepsilon)n}q^{(2k_j N_{j+1})^{1-\varepsilon}\log e-2k_j\log_2 n+\frac{jk_j}{2}+2k_j\log h(n)}\\
	 &\ll q^{(1/2+\varepsilon)n}q^{2k_j\left(j/4+\log f(n)\right)}\ll q^{(1/2+2\theta+\varepsilon)n},
	 \end{align*}
	 where we must take $\theta < \tfrac{1}{4}$ and note that 
	 \[ \frac{N_{j+1}}{N_j} e^{O\big(\frac{1}{\ln N_j}\big)}= \frac{j+1+N}{j+N} \left(1 + O\left(\frac{1}{\ln \log n}\right)\right)
	 \le  \left(1+\frac{1}{\log n}+O\left(\frac{1}{\ln \log n}\right)\right).
	 \]
	 Finally, choosing $\theta = \tfrac{1}{5}$ completes the proof of the lemma.
	\end{proof}

\section{Proof of Theorem \ref{main theorem}}
We begin by evaluating high moments of short Euler products, thereby extending the range of uniformity established in Lemma~3.5 of~\cite{Lumley}. This investigation is fundamentally linked to the expected value of the moments of random short Euler products (see equation~\eqref{realmoments}), which can be analyzed within the associated probabilistic framework (cf.~\cite[Section~4]{Lumley} and Section~\ref{mrsep}). The proof of the main theorem is then completed via a saddle point analysis, presented in Section~\ref{spa}. In what follows, we confine our attention to the computation of positive real moments, though the argument readily extends to complex moments of same strength.
\subsection{Moments for short Euler product}
For any real number $k \ge 1$, we have
\begin{align}\label{moment for SEP}
\sum_{D\in \mathcal{H}_n}L(1, \chi_D; y)^k=\sum_{\substack{f\in \mathcal{M}\\P\mid f \implies d(P)\leq y}}\frac{d_k(f)}{|f|}S(n; f),
\end{align}
where $d_k(f)$ is the generalized divisor function defined in \eqref{generalized_divisor}, and
\[
S(n; f) = \sum_{D \in \mathcal{H}_n} \chi_D(f).
\]
We decompose $f$ as $f = f_1 f_2^2 f_3^2$,
where $f_1$ and $f_2$ are square-free with $(f_1, f_2) = 1$, and $P\mid f_3 \implies P\mid f_1 f_2$.  
Observe that $
S(n; f_1 f_2^2 f_3^2) = S(n; f_1 f_2^2),
$ 
and hence the sum \eqref{moment for SEP} reduces to
\begin{align*}
&\sum_{f_1\in \mathcal{S}(y)}\mu^2(f_1)\sum_{\substack{f_1\in \mathcal{S}(y)\\ (f_1, f_2)=1}}\mu^2(f_2)S(n; f_1 f_2^2)\sum_{P\mid f_3 \implies P\mid f_1 f_2}\frac{d_k(f_1f_2^2 f_3^2)}{|f_1||f_2|^2|f_3|^2}\\
&=\sum_{f_1\in \mathcal{S}(y)}\mu^2(f_1)\sum_{\substack{f_1\in \mathcal{S}(y)\\ (f_1, f_2)=1}}\mu^2(f_2)S(n; f_1 f_2^2)\prod_{P\mid f_1}\mathcal{A}_{P}^-(k)\prod_{P\mid f_2}\left(\mathcal{A}_{P}^+(k)-1\right),
\end{align*}
where $\mathcal{S}(y)$ denotes the set of all monic polynomials whose prime divisors have degree $\leq y$, and
\[
\mathcal{A}_P^{\pm}(k)=\frac12 \left( \left(1-\frac{1}{|P|}\right)^{-k}\pm \left(1+\frac{1}{|P|}\right)^{-k}\right). 
\]
We now analyze the cases $\deg(f_1) = 0$ and $\deg(f_1) \ge 1$.  
If $\deg(f_1) = 0$, then $f_1 = 1$, since $f_1 \in \mathcal{M}$.  
Hence, by Lemma~\ref{character sum over square} and \eqref{expectation for sep}, the contribution to the above sum from the case $\deg(f_1) = 0$ is given by
\begin{align*}
	 &\sum_{f_2 \in \mathcal{S}(y)} \mu^2(f_2) S(h; f_2^2)
	\prod_{P \mid f_2} \big( \mathcal{A}_P^{+}(k) - 1 \big)\\
	&= |\mathcal{H}_n|\sum_{f_2 \in \mathcal{S}(y)} \mu^2(f_2)
	\prod_{P \mid f_2} \left( 1 + \frac{1}{|P|} \right)
	\prod_{P \mid f_2} \big( \mathcal{A}_P^{+}(k) - 1 \big) + O\!\bigg(
	\sum_{f_2 \in \mathcal{S}(y)}\mu^2(f_2)
	\prod_{P \mid f_2} |\mathcal{A}_P^{+}(k) - 1|
	\bigg)\\
	&= |\mathcal{H}_n|\, \mathbb{E}\!\left( L(1, \mathbb{X}; y)^k \right)
	+ O\!\bigg(
	\prod_{d(P) \le y} |\mathcal{A}_P^{+}(k) - 1|
	\bigg),
	\end{align*}
	where $\mathbb{E}\!\left( L(1, \mathbb{X}; y)^k\right)$ is given by \eqref{expectation for sep}. Next, we consider the case $\deg(f_1) \ge 1$.  
	Applying Lemma~\ref{character sum over nonsquare}, the corresponding contribution to the sum is bounded by
	\begin{align*}
	&\ll  q^{\left(\tfrac{1}{2}+\varepsilon+o(1)\right)n}
	\sum_{f_1 \in \mathcal{S}(y)} \sum_{\substack{f_2 \in \mathcal{S}(y)\\(f_1, f_2)=1}}
	\mu^2(f_1) \mu^2(f_2)
	\prod_{P \mid f_1}
	|\mathcal{A}_P^{-}(k)| \, |f_1|^{\frac{1}{n^\varepsilon}}
	\times \prod_{P \mid f_2} |\big( \mathcal{A}_P^{+}(k) - 1 \big)|
	\left( 1 + \frac{1}{|P|^{1/2 + \varepsilon}} \right)\\
	&\ll q^{(1/2+\varepsilon+o(1))n}
	\prod_{d(P) \le y}
	\left(1 + |\mathcal{A}_P^{-}(k)| |P|^{\frac{1}{n^\varepsilon}}
	+ |\mathcal{A}_P^{+}(k) - 1|
	\left(1+|P|^{-1/2-\varepsilon} \right)
	\right),
	\end{align*}
since $\exp\left((d(f_1))^{1-\varepsilon}\right)\leq q^{o(n)}|f_1|^{1/n^{\varepsilon}}$.	
We now divide the analysis into two parts according to whether
\[
d(P) \le \left\lfloor \frac{\log(4k + 4)}{\log q} \right\rfloor =: s_k
\quad \text{or} \quad
d(P) > \left\lfloor \frac{\log(4k + 4)}{\log q} \right\rfloor.
\]
In the latter case, we have
\[
|\mathcal{A}_P^{-}(k)| \le \frac{2k}{|P|}, 
\qquad 
|\mathcal{A}_P^{+}(k) - 1| \le \frac{2k^2}{|P|^2}.
\]
Hence, the contribution is bounded by
\begin{align*}
&\ll q^{(\frac{1}{2} + \varepsilon + o(1))n} 
\quad \times \prod_{d(P) \le \min\{y, s_k\}} 
\biggl(1 + |\mathcal{A}_P^{-}(k)|\, |P|^{1/n^\varepsilon} + 2\, |\mathcal{A}_P^{+}(k) - 1|\biggr) \\
&\quad \times \prod_{d(P) > \min\{y, s_k\}} 
\biggl(1 + 5\, |P|^{1/n^\varepsilon} \frac{k}{|P|}\biggr).
\end{align*}
	Note that for $k\geq 1$,
	\[
	|A_P^{+}(k)| \le \left(1-|P|^{-1}\right)^{-k},
	\]
	and from \cite[eq. $(3.5)$]{Lumley},
	\begin{align}\label{expectation for sep}
	\mathbb{E}\!\left( L(1, \mathbb{X}; y)^k \right)
	= \prod_{d(P) \le y}
	\left(
	\frac{1}{|P|+1}
	+ \frac{|P|}{2(|P|+1)}
	\left( \left(1 - \frac{1}{|P|}\right)^{-k}
	+ \left(1 + \frac{1}{|P|}\right)^{-k}
	\right)
	\right).
	\end{align}
	Consequently,
	 \[
	 \prod_{\min\{y, s_k\}}\left( \frac{|P|}{2(|P|+1)}\left(1-\frac{1}{|P|}\right)^{-k}\right)\leq \mathbb{E}\left(L(1, \mathbb{X}; y)^k\right).
	 \]
These observations allow us to reduce the previous bound to
	\begin{align*}
	\ll q^{(1/2+\varepsilon+o(1))n} \prod_{d(P) \leq \min\{y, s_k\}} |P|^{1/n^{\varepsilon}}\left(2 + |\mathcal{A}_P^+(k)|+ |\mathcal{A}_{P}^-(k)|\right) \prod_{d(P)>\min\{ y, s_k\}}\left(
	1 + 5|P|^{\frac{1}{n^\varepsilon}} \frac{k}{|P|}
	\right).
	\end{align*}
	If $y > s_k$, then this is bounded by
\begin{align*}
&\ll q^{(1/2+\varepsilon+o(1))n}\mathbb{E}\!\left( L(1, \mathbb{X}; y)^k \right)\prod_{d(P) \leq s_k}12 |P|^{1/n^{\varepsilon}}\prod_{d(P)> s_k}\left(
1 + 5|P|^{\frac{1}{n^\varepsilon}} \frac{k}{|P|}
\right)\\
&\ll  q^{(1/2+\varepsilon+o(1))n}\mathbb{E}\!\left( L(1, \mathbb{X}; y)^k \right)\exp\left(\frac{k\ln (12)}{\log(4k+4)}+O\left(\frac{k}{n^{\varepsilon}}\right)+5kq \ln \left(\frac{y}{\ln (4k+4)}\right)\right),
\end{align*} 
provided that $y\leq n^{\varepsilon}$. 
Choosing 
\[
k = \frac{n \log n}{q^4 B} \quad \text{with } B \ge 3q, \quad \text{and} \quad y = \log n + \log_2 n + B,
\] we obtain
	$$\ll q^{\left(\frac{1}{2}+\varepsilon+o(1)\right)n} \exp\left( \frac{(\ln 12) n}{q^4 B} + \frac{10 n}{q^3}\right) \mathbb{E}\!\left( L(1, \mathbb{X}; y)^k \right) 
	 \ll q^{\frac{9n}{10}}\mathbb{E}\!\left( L(1, \mathbb{X}; y)^k \right).
	 $$
	 Combining all estimates, we conclude that, letting $B\geq 3q$ and $y = \log n + \log_2 n + B$, uniformly for any $2 \le k \le \frac{n \log n}{q^4 B}$,
	 \begin{equation}\label{realmoments}
	 \frac{1}{|\mathcal{H}_n|} \sum_{D \in \mathcal{H}_n} L(1, \chi_D; y)^k
	 = \mathbb{E}\!\bigl( L(1, \mathbb{X}; y)^k \bigr) \Bigl( 1 + O\left(q^{-n/10}\right) \Bigr).
	 \end{equation}
	 In the next section, we provide an estimate for $\mathbb{E}\!\bigl( L(1, \mathbb{X}; y)^k \bigr)$.
	 \subsection{Moments of random short Euler product}\label{mrsep}
	For any $r, y \ge q$, define
	\[
	\mathcal{L}(r; y) := \log \mathbb{E}\!\bigl( L(1, \mathbb{X}; y)^r \bigr) = \sum_{d(P) \le y} \log E_P(r),
	\]
	where $E_P(r)$ is as in \cite[eqs.~(3.5) and (4.1)]{Lumley}.  
	
	Following the proof of \cite[Proposition~4.2]{Lumley}, one finds
	\begin{equation}\label{expectation at y}
	\mathcal{L}(r; y) = r \ln \log \min\{r, q^y\} + r \gamma + \frac{r G_1(tr/y)}{\log r} + O\!\left( \frac{r \log_2 r}{(\log r)^2} \right),
	\end{equation}
	where $G_1(t)$ is defined in \cite[eq.~(4.7)]{Lumley}, and $k \in \mathbb{Z}$ is the unique positive integer such that $q^k \le r < q^{k+1}$ with $t:= r/q^k$.  
	
	Moreover, the function $G_1$ satisfies
	\begin{equation}\label{tau and kappa relation at y}
	G_1(tr/y) = G_1(t) + O\!\bigl(r/q^y\bigr).
	\end{equation}
	 	\subsection{Saddle point analysis}\label{spa}
   For any $r \geq q$, define
	 	\begin{align*}
	 	\phi_n(\tau; y) &:=\frac{1}{|\mathcal{H}_n|}\sum_{\substack{D\in \mathcal{H}_n\\ L(1, \chi_D; y)\geq e^{\gamma}\tau}}1, \end{align*} 
	 	which gives
	 	\begin{align*} r\int_{0}^{\infty}u^{r-1}\phi_n(u; y)du
	 	&=\frac{e^{-\gamma r}}{|\mathcal{H}_n|}\sum_{D\in \mathcal{H}_n}L(1, \chi_D; y)^{r}.
	 	\end{align*}
	 	We apply \eqref{realmoments} for $q^{q}\leq f:=f(n)\leq \log n$ and $B=3\log f(n)$ to get 
	 	\begin{align*}
	 	&r \int_{0}^{\infty}u^{r-1}\phi_n(u; y)du=e^{-\gamma r} E\left(L(1, \mathbb{X}; y)^{r}\right)+O\left(e^{-\gamma r}q^{-n/10}  E\left(L(1, \mathbb{X}; y)^{r}\right)\right),
	 	\end{align*}
	 	uniformly for $r\leq \frac{n \log n}{3q^4 \log f(n)}$ and $y=\log n +\log_2 n+3\log f(n)$. Using \eqref{expectation at y} and \eqref{tau and kappa relation at y},
	 	we get
	 	\begin{align}\label{asymp for kappa2-short}
	 	\int_{0}^{\infty}u^{r-1}\phi_n(u; y)du 
	 	&=(\log r)^{r}\exp\left(\frac{r}{\log r}G_1(t)+O\left(\frac{r \log_2 r}{(\log r)^2}\right)+O\left(\frac{r^2}{q^y \log r}\right)\right).
	 	\end{align}
	 	Let  $r_2=r e^{\delta}$, where $\delta>0$ is sufficiently small to be determined later. We apply  \eqref{asymp for kappa2-short} with $r_2$ and rankin's trick to obtain  
	 	\begin{align*}
	 	&\int_{\tau+\delta}^{\infty}u^{r-1}\phi_n(u; y)du \leq (\tau +\delta)^{r-r_2}\int_{0}^{\infty}u^{r_2-1}\phi_n(u; y)du\\
	 	&=(\log{r} )^{r} \exp\left(r(1-e^{\delta})\log \left(1+\frac{\delta}{\tau}\right)+r e^{\delta}\log \left(1+\frac{\delta}{\log r}\right)\right) \\
	 	& \times \exp\left(\frac{r e^{\delta}}{\log r}G_1(t)+O\left(\frac{r\log_2 r}{(\log r)^2}\right)+O\left(\frac{r^2}{q^y\log r}\right)\right)  \exp \left(r (1-e^\delta) \left( \log{\tau} - \log_2{r} \right) \right), \end{align*}
	 	which holds uniformly for 
	 	\begin{align*} 
	 	r_2 \leq \frac{n \log n}{3q^4 \log f(n)} \iff r\leq  e^{-\delta}\frac{n \log n}{3q^4 \log f(n)}.
	 	\end{align*}
	 	Note that, from \cite[eq.~(4.11)]{Lumley}, 
	 	\[
	 	\log \kappa = \tau - G_2\!\bigl(q^{\{\log \kappa\}}\bigr) + O\!\left( \frac{\log \tau}{\tau} \right),
	 	\]
	 	since, for every $\tau$, there exists a unique $\kappa$ satisfying \cite[eq.~(4.2)]{Lumley}, where $G_2(t)$ is defined in \cite[eq.~(4.9)]{Lumley}.  
	 	Hence, for any such $\kappa$, we may choose $r = \kappa(\tau)$ via
	 	\begin{align}\label{g_2 function}
	 	\log \kappa = \tau - G_2\!\bigl(q^{\{\log \kappa\}}\bigr).
	 	\end{align}
	Choosing $\delta = \frac{c}{\sqrt{\log r}}$, from the above informations,
	 	\begin{align*}
	 	\int_{\tau+\delta}^{\infty}u^{r-1}\phi_n(u; y)du& \leq (\log r)^{r}\exp\left(\frac{r}{\log r}G_1(t)+O\left(\frac{r \log_2 r}{(\log r)^2}\right)+O\left(\frac{r^2}{q^y\log r}\right)\right) \exp \left( {-\frac{c^2r}{(\log r)^2}} \right)		\end{align*}
	 	for some  constant $c$. 
	 	Using \eqref{asymp for kappa2-short}, we have 
	 	\begin{align*}
	 	\int_{\tau+\delta}^{\infty}u^{r-1}\phi_n(u; y)du &\leq \left(\int_{0}^{\infty}u^{r-1}\phi_n(u; y)du\right) \exp \left( {-\frac{c^2 r}{(\log r)^2}} \right),
	 	\end{align*}
	 	and similarly, one can show that
	 	\begin{align*}
	 	\int_0^{\tau-\delta}u^{r-1}\phi_n(u; y)du\leq \left(\int_{0}^{\infty}u^{r-1}\phi_n(u; y)du\right) \exp \left( {-\frac{c^2r}{(\log r)^2}} \right).
	 	\end{align*}
	 	The exponential decay in the above inequalities implies that most of the mass of the integral is concentrated within the sector $[\tau - \delta,\, \tau + \delta]$ as $r$ increases.
	 	 Applying \eqref{asymp for kappa2-short}, 
	 	\begin{align*} 
	 	\int_{\tau-\delta}^{\tau+\delta}u^{r-1}\phi_n(u; y)du&= \int_0^{\infty} u^{r-1}\phi_n(u; y)du \left( 1 + O \left( \exp \left( {-\frac{c^2r}{(\log r)^2}} \right)\right) \right) \\
	 	&=(\log r)^{r}\exp\left(\frac{r}{\log r}G_1(t)+O\left(\frac{r \log_2 r}{(\log r)^2}\right)+O\left(\frac{r^2}{q^y\log r}\right)\right).
	 	\end{align*}
	 	Since $\phi_n(u; y)$ is non-increasing function, we obtain
	 	\[
	 	\tau^{r}\exp\left(O\left(\frac{\delta r}{\tau}\right)\right)\phi_n(\tau+\delta;y)\leq \int_{\tau-\delta}^{\tau+\delta}u^{r-1}\phi_n(u; y)du \leq \tau^{r}\exp\left(O\left(\frac{\delta r}{\tau}\right)\right)\phi_n(\tau-\delta; y).
	 	\]
	 Using \eqref{asymp for kappa2-short} and \eqref{g_2 function},  
	 	\begin{align*}
	 	\phi_n(\tau+\delta; y)\leq
	 	\exp\bigg(-C_0\left(q^{\{\log \kappa\}}\right)\frac{e^{\tau-C_1\left(q^{\{\log \kappa\}}\right)}}{\tau}\left(1+O\left(\delta\right)+O\left(e^{\tau}/q^y\right)\right)\bigg)
	 	\leq \phi_n(\tau-\delta; y),
	 	\end{align*}
	 	and therefore
	 	\begin{align}\label{distribution for phi-short}
	 	\phi_{n}(\tau; y)=\exp\bigg(-C_0\left(q^{\{\log \kappa\}}\right)\frac{e^{\tau-C_1\left(q^{\{\log \kappa\}}\right)}}{\tau}\left(1+O\left(\tau^{-\frac{1}{2}}\right)+O\left(e^{\tau}/q^y\right)\right)\bigg)
	 	\end{align}
	 	uniformly in the range
	 	\begin{align*}
	 	\tau&=\log r+C_0\left(q^{\{\log \kappa\}}\right)\leq \log \left( e^{-\delta}\frac{n \log n}{3 q^4 \log f(n)}
	 	\right) + C_0\left(q^{\{\log \kappa\}}\right)\leq \log n+\log_2 n-\theta(n),
	 	\end{align*}
	 where $\theta(n)=\log_2 f(n)-4-\log 3+o(1)\ll \log_3 n$. 
	 \begin{proof}[Proof of Theorem \ref{main theorem}]
	 We use Lemma \ref{full l to its truncation} to replace $\phi_n(\tau; y)$ by $\phi_n(\tau)$, with an appropriate error term. So,
	 	\begin{align*}
	 	\phi_{n}(\tau)=\frac{1}{|\mathcal{H}_n|}\sum_{\substack{D\in \mathcal{H}_n\\ L(1, \chi_D)\geq e^{\gamma}\tau}}1&=\frac{1}{|\mathcal{H}_n|}\sum_{\substack{D\in \mathcal{H}_n\\ L(1, \chi_D; y)\geq e^{\gamma}\tau \left(1+O\left(\frac{1}{f(n)\log n}\right)\right)}}1+O(q^{-\frac{3n}{10}})\\
	 	&= \phi_{n}\left(\tau\left(1+O\left(1/(f(n)\log n)\right)\right); y\right)+O(q^{-\frac{3n}{10}}),
	 	\end{align*}
	 	and from \eqref{distribution for phi-short}, we have
	 	\begin{align*}
	 	&\phi_{n}\left(\tau\left(1+O\left(1/(f(n)\log n)\right)\right); y\right)\\
	 	&=\exp\bigg(-C_0\left(q^{\{\log \kappa\}}\right)\frac{e^{\tau-C_1\left(q^{\{\log \kappa\}}\right)}}{\tau}\left(1+O\left(\tau^{-\frac{1}{2}}\right)+O\left((f(n))^{-1}\right)+O\left(e^{\tau}/q^y\right)\right)\bigg).
	 	\end{align*}
	 	Hence, we conclude that
	 	\begin{align*}
	 	\phi_{n}(\tau)&=\exp\bigg(-C_0\left(q^{\{\log \kappa\}}\right)\frac{e^{\tau-C_0\left(q^{\{\log \kappa\}}\right)}}{\tau}\left(1+O\left(\tau^{-\frac{1}{2}}\right)+O\left((f(n))^{-1}\right)\right)\bigg).
	 	\end{align*}
	This completes the proof of the theorem by taking $f(n)$ to be a slowly increasing function of $n$, so that $\theta(n)$ increases accordingly.
	\end{proof}	
\section{Proof of Theorem \ref{theorem on resonance method}}
We employ the long resonator method applied by Aistleitner \emph{et al.}~\cite{AMM, AMMP} and Darbar--Maiti~\cite{DM}, based on the original idea of Bondarenko--Seip~\cite{BS, BS3}.
\subsection{Resonator and related estimates}
 Let us consider $N:=\log n+ \log_2 n+\log c$, where the constant $c$ will be optimized later.
We define the resonator as
\[R_{D}=\prod_{d(P)< N} \left(1-\left(1-\frac{|P|}{q^N}\right)\chi_{D}(P)\right)^{-1}=\sum_{f\in \mathcal{M}} r_f \chi_{D}(f),
\] 
where $r_{{P}} =(1-|P|/q^N)$ for $d(P)< N$, $r_P=0$ for $d(P)\geq N$. The coefficients $r_f$ are extended completely multiplicatively.
We start with bounding
\begin{align*}|R_{D}|=\prod_{d(P)< N} \big|\left(1-r_P \chi_{D}(P)\right)^{-1}\big| \le \prod_{d(P)< N} \left( 1-\left(1- \frac{|P|}{q^N}\right) \right)^{-1}\le \prod_{d(P)\leq N}\frac{q^N}{|P|}.
\end{align*} 
Applying the prime polynomial theorem and Abel’s summation formula, 
\begin{align}\label{RD}
&\log |R_{D}|\le N\sum_{d(P)\leq N} 1 -\sum_{d(P)\leq N} d(P)
= N\Pi_{q}(N)-\sum_{n\leq N}n\pi_{q}(n)\nonumber\\
&=\int_{1}^{N}\Pi_q(u)du=\frac{c \zeta_{\mathbb{A}}(2)}{\ln q}\left(1+O\bigg(\frac{\log_2 n}{\log n}\bigg)\right)n,
\end{align}
since $\Pi_{q}(N)=\zeta_{\mathbb{A}}(2)\frac{q^N}{N}\left(1+O\left(\frac{\ln N}{N}\right)\right)$ and the choice of $N$.

We estimate the mean square of the resonator using Lemmas~\ref{character sum over square} and~\ref{character sum over nonsquare} to obtain
\begin{align*}
&S_2:=\sum_{\substack{D\in \mathcal{H}_n}}R_{D}^{2}=\sum_{f, g\in \mathcal{M}}r_f r_g \sum_{\substack{D\in \mathcal{H}_n}}\chi_{D}(fg)\\
&=|\mathcal{H}_n|\sum_{\substack{f, g \in \mathcal{M}\\ fg=\square }}r_f r_g h(fg)  
 +O\Bigg(q^{(\frac 1 2 + \ve)n} \sum_{\substack{f, g\in \mathcal{M}\\fg=\ell_1 \ell_2^2}}r_f r_g \exp\left((d(\ell_1))^{1-\varepsilon}\right)\prod_{P\mid \ell_2}(1+|P|^{-1/2-\varepsilon})\Bigg),	
\end{align*}
where $h(f)=\prod_{P|f} \left(\frac{|P|}{|P|+1}\right)$.

 Since $\ell_1$ is a square-free part of $fg$ and $\ell_1\mid fg$, it implies from \eqref{prime poly th} that $d(\ell_1)\le \sum_{d(P)\le N} d(P) = \zeta_{\mathbb{A}}(2)q^N(1+o(1))$. Therefore, with the choice of $N$, 
\[
\exp\left(d(\ell_1)^{1-\varepsilon}\right)=\exp\left(\left(\zeta_{\mathbb{A}}(2)q^N\right)^{1-\ve}\right)\ll q^{\ve n}.
\] 
Using these estimates together with~\eqref{RD}, we obtain that the error term is of size $$O\left( q^{\left(\frac12+\frac{2c\zeta_{\mathbb{A}}(2)}{\ln q}+2\varepsilon\right)n}\right).$$
Hence, by the multiplicativity of $r_f$ and noting that $h(p^k)=h(k)$, we conclude 
\begin{align*}
S_2
=|\mathcal{H}_n|\sum_{\substack{k\in \mathcal{M}} } r^2_k\, d(k^2) h(k)+O\left( (q^{\left(\frac12+\frac{2c\zeta_{\mathbb{A}}(2)}{\ln q}+2\varepsilon\right)n}\right).
\end{align*}
\subsection{Interaction between the $L$-function and the resonator over the family}
Using Lemmas \ref{character sum over square} and \ref{character sum over nonsquare}, we compute
\begin{align*}
S_1&=\sum_{D\in \mathcal{H}_n}L(1,\chi_D; M) R_{D}^{2}=\sum_{f, g\in \mathcal{M}}r_f r_g \sum_{e\in \mathcal{M}}\frac{a_e}{|e|} \sum_{\substack{D\in \mathcal{H}_n}}\chi_{D}(efg)\\
&=\sum_{e\in \mathcal{M}}\frac{a_e}{|e|}\sum_{\substack{fg\in \mathcal{M}\\efg=\square}}r_f r_g \Bigg(|\mathcal{H}_n|\prod_{\substack{P\mid efg}} \left(\frac{|P|}{|P|+1}\right) + O(1)\Bigg)\\
&+O\Bigg(q^{\left(\frac12 + \ve\right) n}\sum_{e\in \mathcal{M}}\frac{a_e}{|e|} \sum_{\substack{f, g\in \mathcal{M}\\efg=s_1 s_2^2}}r_f r_g \alpha(s_1)\beta(s_2)\Bigg),
\end{align*}
where $\alpha(s_1):=\exp\left((d(s_1))^{1-\varepsilon}\right)$ and $\beta(s_2):=\prod_{P\mid s_2}\left(1+|P|^{-1/2-\varepsilon}\right)$. 

The first error term of the above expression is bounded above by
$$\ll  \left(\sum_{e\in \mathcal{M}}\frac{a_e }{|e|}\right) \left(\sum_{\substack{f\in \mathcal{M}}}r_f\right)^2\ll q^{\frac{2 c \zeta_{\mathbb{A}}(2)}{\ln q}\left(1+O\left(\log_2 n/\log n\right)\right)n}\log n,$$
since for $M= 3\log n$,
$$\displaystyle\sum_{e\in \mathcal{M}}\frac{a_e}{|e|}\ll \prod_{d(P)\le M}\left(1+\frac{1}{|P|}\right)\ll M\ll \log n,$$ 
and following \eqref{RD} and the choice of $N$,  
\begin{align}\label{rmgm}
\displaystyle\sum_{\substack{f\in \mathcal{M}}} r_f & =\prod_{d(P)< N}\left(1-r_P\right)^{-1}=q^{\frac{c\zeta_{\mathbb{A}}(2)}{\ln q}\left(1+O\left(\log_2 n/\log n\right)\right)n}.
\end{align} 
 The second error term is bounded above by
\begin{align}\label{end et}
\ll q^{\left(\frac12 + \ve\right) n}\Bigg(\sum_{\substack{e\in \mathcal{M}\\e=e_1 e_2^2}}\frac{a_e \alpha(e_1)\beta(e_2)}{|e|}\Bigg) \Bigg(\sum_{\substack{f\in \mathcal{M}\\f=f_1f_2^2}}r_f \alpha(f_1)\beta(f_2)\Bigg)^2.
\end{align}
Using \eqref{prime poly th}, the choice of $M$, and $\alpha(e_1)\ll q^{\varepsilon n/4}$, we compute that 
\begin{align*}
&\sum_{e\in \mathcal{M}}\frac{a_e \alpha(e_1)\beta(e_2)}{|e|}\le \sum_{d(e)\le \log n}\frac{a_e \alpha(e_1)}{|e|} + \sum_{d(e)> \log n}\frac{a_e \alpha(e_1)\beta(e_2)}{|e|}\\
&	\le q^{\varepsilon n/4}\sum_{e\in \mathcal{M}}\frac{a_e \beta(e_2)}{|e|} + \sum_{d(e)> \log n}\frac{a_e \beta(e_2) |e_1|^{\frac{1}{n^{\ve}}}}{|e|}\ll q^{\frac{\ve n}{3}}.
\end{align*}
From \eqref{rmgm} and above estimation together with the bound $\alpha(f_1)\ll q^{\ve n/4}$, we get
\[
\eqref{end et}\ll  q^{\left(\frac12+\frac{2c\zeta_{\mathbb{A}}(2)}{\ln q}+\frac{11\varepsilon}{6}\right)n}.
\]
Hence, we obtain
\begin{align*}
S_1=|\mathcal{H}_n| \sum_{e\in \mathcal{M}}\frac{a_e}{|e|}\sum_{\substack{e, f, g\in \mathcal{M}\\ efg=\square }}r_f r_g h(efg)  + O\left(q^{\left(\frac12+\frac{2c\zeta_{\mathbb{A}}(2)}{\ln q}+\frac{11\varepsilon}{6}\right)n}\right).
\end{align*}
\subsubsection{Shortening the length of Euler product using positivity}
Consider $L(1,\chi_D; N):= \prod_{d(P)\le N}$$\left(1-\frac{\chi_D(P)}{|P|}\right)^{-1} = \sum_{f\in \mathcal{M}}\frac{b_f \chi_{D}(f)}{|f|}$, where $b_f =1$ if all prime divisors of $k$ have degree $\le N$, and $0$ otherwise. Here, we employ the positivity of resonator coefficients, namely $r_f\geq 0$, alongside ensuring that $a_f\geq b_f\geq 0$, enabling us to disregard irreducible polynomials that have degree larger than $N$. Consequently, we arrive at
\begin{align*}
S_1\ge |\mathcal{H}_n| \sum_{e\in \mathcal{M}}\frac{b_e}{|e|}\sum_{\substack{e, f, g\in \mathcal{M}\\ efg=\square }}r_f r_g h(efg)  + O\left(q^{\left(\frac12+\frac{2c\zeta_{\mathbb{A}}(2)}{\ln q}+\frac{11\varepsilon}{6}\right)n}\right).
\end{align*}
\begin{proof}[Proof of Theorem \ref{theorem on resonance method}]
	We now aim to obtain a lower bound for the ratio $S_1/S_2$ based on the preceding estimates of $S_1$ and $S_2$, optimized with respect to the parameter $c$.  
	The optimization procedure is carried out in the Appendix.
	
	From the computation presented therein, together with~\eqref{ratio} and the choice 
	$
	N = \log n + \log_2 n + \log c,
	$
	we deduce that
\begin{align}\label{ratio final}
\frac{S_1}{S_2}\geq e^{\gamma}\left(\log n+\log_2 n+ \frac12-c_3 \frac{q}{q-1}+\log c+O\left(\frac{\log_2 n}{\log n}\right)\right),
\end{align}
where $c_3=\frac{\pi}{4}-\frac{\ln 2}{2}$ and $\zeta_{\mathbb{A}}(2)=\frac{q}{q-1}$. 
Also, from \eqref{optimzed constant}, we can choose $c$ in such a way that 
\begin{align}\label{constant c}
\log c=\log\left(\frac{\ln q}{2(3\ln 2-\pi/2)\zeta_{\mathbb{A}}(2)}\right)-\beta+\frac{1}{\sqrt{\log n}}.
\end{align}

The additional factor $1/\sqrt{\log n}$ in the choice of $c$ is essential for proving Theorem \ref{theorem on resonance method}.
Hence, using \eqref{asym for L(1)}, we can that there exists a $D\in \mathcal{H}_n$ such that 
\[
L(1,\chi_D)\ge e^{\gamma}\left(\log n +\log_2 n+C_2(q)-\beta+o(1)\right).
\]
We use \eqref{asym for L(1)}, \eqref{ratio final}, \eqref{constant c} to obtain 
\begin{align*}
&\bigg(\tau_{\beta, n}+\frac{1}{2\sqrt{\log n}}\bigg)\sum_{D\in \mathcal{H}_n}R_D^2\leq \sum_{D\in \mathcal{H}_n}L(1, \chi_D)R_D^2\\
&= \sum_{\substack{D\in \mathcal{H}_n\\ L(1, \chi_D)< \tau_{\beta, n}}}L(1, \chi_D)R_D^2+\sum_{\substack{D\in \mathcal{H}_n\\ L(1, \chi_D)\geq  \tau_{\beta, n}}}L(1, \chi_D)R_D^2< \tau_{\beta, n} \sum_{D\in \mathcal{H}_n}R_D^2+ \sum_{\substack{D\in \mathcal{H}_n\\ L(1, \chi_D)\geq  \tau_{\beta, n}}}L(1, \chi_D)R_D^2.
\end{align*}
From the above computation of $S_2$ together with \eqref{scx} gives us
\[
\sum_{\substack{D\in \mathcal{H}_n\\ L(1, \chi_D)\geq \tau_{\beta, n}}}L(1, \chi_D)R_D^2> \frac{e^{\gamma}}{2 \sqrt{\log n}}\sum_{D\in \mathcal{H}_n}R_D^2> \frac{e^{\gamma}}{2}\frac{q^{(1+c')n\left(1+O(\log_2 n/\log n)\right)}}{\sqrt{\log n}}.
\]
On the other hand, using \cite[Proposition 1.4]{Lumley} that
\[
\max_{D\in \mathcal{H}_n}L(1, \chi_D)\leq 2e^{\gamma}\log n+O_q(1),
\]
we have from \eqref{RD},
\begin{align*}
\sum_{\substack{D\in \mathcal{H}_n\\ L(1, \chi_D)\geq  \tau_{\beta, n}}}L(1, \chi_D)R_D^2&\leq |\mathcal{H}_n| \left(\max_{D\in \mathcal{H}_n} R_D^2 \cdot \max_{D\in \mathcal{H}_n}L(1, \chi_D)\right)\phi_n(\beta)\\
&\leq \left(2e^{\gamma}\log n\right) q^{\left(1+\frac{2c\zeta_{\mathbb{A}}(2)}{\ln q}\right)\left(1+O\left(\frac{\log_2 n}{\log n}\right)\right)n} \phi_n(\beta).
\end{align*}
Comparing the above lower and upper bound, and using $c$ as expressed in \eqref{constant c}, we finally conclude that
\[
\phi_n(\beta)> \frac{1}{4(\log n)^{3/2}}q^{-\frac{c(3\ln 2-\pi/2)\zeta_{\mathbb{A}}(2)}{\ln q}\left(1+O\left(\log_2 n/\log n\right)\right)}\geq e^{-\frac{q^{-\beta}\left(1+O\left(1/\sqrt{\log n}\right)\right)\ln q}{2}}.
\]
\end{proof}

\section*{Acknowledgment} 
The author thanks Allysa Lumley and Igor Shparlinski for reading the first draft of this article, and Winston Heap for helpful discussions.  
The author also acknowledges the financial support of the Australian Research Council through Grant~DP230100534, and the hospitality of the Max Planck Institute for Mathematics, Bonn.


\section{Appendix}
\subsection{Estimating the ratio \texorpdfstring{$S_1/S_2$}{S1/S2} and optimizing the constant \texorpdfstring{$c$}{c}}
Our goal in this section is to derive an explicit expression for the sums involved in order to determine the optimal value of the constant~$c$.
\begin{align}\label{eq M}
\mathcal{S}_{c, N}:=\sum_{e\in \mathcal{M}}\frac{b_e}{|e|}\sum_{\substack{e, f, g\in \mathcal{M}\\ efg=\square }}r_f r_g h(efg) \quad \text{ and } \quad \mathcal{R}_{c, N}:=\sum_{\substack{k\in \mathcal{M}} } r^2_k\, d(k^2) h(k).
\end{align}
\subsubsection*{Decomposition of polynomials} We decompose $e=e_1 e_2^2 e_3^2$ where $e_1$ and $e_2$  are square-free, with $(e_1, e_2) = 1$, and $P|e_3 \implies P|e_1 e_2$.  Similarly, we decompose $f$ and $g$ as $f=f_1f_2^2f_3^2$ and $g=g_1g_2^2g_3^2$, where $f_i$ and $g_i$ $(i=1, 2)$ are  square-free, $(f_1, f_2)=(g_1, g_2)=1$, and for any prime polynomial $P$: $P\mid f_3\implies P\mid f_1 f_2$, and $P\mid g_3\implies P\mid g_1 g_2$. 

So $efg=\square \implies e_1 f_1 g_1 =\square$, where $e_1 , f_1,$ and $ g_1$ are square-free polynomials.

Without loss of generality, let $V=(e_1 , g_1)$. Write $e_1 = V e_1^{\prime}$ and $g_1 = V g_1^{\prime}$ with $(e_1^{\prime}, g_1^{\prime})=1$. Therefore $e_1 f_1 g_1=\square \implies f_ 1=  e_1^{\prime} g_1^{\prime}$. 
\subsubsection*{Euler product representation of $\mathcal{S}_{c, N}$} From the above decomposition, $\mathcal{S}_{c, N}$ becomes
\begin{align*}
\mathcal{S}_{c, N}&=\sum_{\substack{V \in \mathcal{M}\\ (V\,  e_1^{\prime}  g_1^{\prime} e_2 g_2)=1}} \frac{ \mu^2(V) b_V r_V }{|V|} \sum_{\substack{ e_1^{\prime}\in \mathcal{M}\\ ( e_1^{\prime},\,   g_1^{\prime} e_2 f_2)=1}} \frac{  \mu^2( e_1^{\prime}) b_{e_1^{\prime}} r_{e_1^{\prime}}}{|e_1^{\prime}|}  \sum_{\substack{ g_1^{\prime}\in \mathcal{M}\\ (g_1^{\prime},\, g_2 f_2)=1}} \mu^2(g_1^{\prime}) r_{ g_1^{\prime}}^2  \\
&\times \sum_{e_2 \in \mathcal{M}}\frac{\mu^2(e_2) b_{e_2 }}{|e_2|^2}   \sum_{f_2 \in \mathcal{M}}\mu^2(f_2) r_{f_2}^2 
\sum_{g_2 \in \mathcal{M}}\mu^2(g_2) r_{g_2}^2 h(V e_1^{\prime} g_1^{\prime} e_2 f_2 g_2)\\ 
&\times \sum_{\substack{e_3\in \mathcal{M} \\P|e_3  \implies P|V e_1^{\prime}  e_2 }} \frac{b_{e_3}  }{|e_3|^2 } \sum_{\substack{f_3\in \mathcal{M}\\ P\mid f_3 \implies P| e_1^{\prime} g_1^{\prime} f_2 }} r_{f_3}^2  \sum_{\substack{g_3\in \mathcal{M} \\P|g_3 \implies P|V g_1^{\prime} g_2 }} r_{g_3}^2.
\end{align*}
Define, $B_P =\left(1-\frac{1}{|P|^2}\right)^{-1}$ and $R_P =\left(1-r_{P}^2\right)^{-1}$. We express the following sums into the Euler product form
\begin{align*}
\sum_{\substack{e_3\in \mathcal{M}\\ P|e_3  \implies P|V e_1^{\prime} e_2 }} \frac{b_{e_3} }{|e_3|^2}= \prod_{\substack{P\mid V   e_1^{\prime} e_2  }} B_P,    \sum_{\substack{f_3\in \mathcal{M}\\ P|f_3 \implies P| e_1^{\prime} g_1^{\prime} f_2 }} r_{f_3}^2 =\prod_{\substack{P\mid  e_1^{\prime} g_1^{\prime} f_2}} R_P,    \sum_{\substack{g_3\in \mathcal{M} \\P|g_3 \implies P| V g_1^{\prime} g_2 }} r_{g_3}^2 =\prod_{\substack{P\mid  V g_1^{\prime} g_2 }} R_P.
\end{align*}
Therefore, 
\begin{align*}
\mathcal{S}_{c, N}&=\sum_{\substack{V \in \mathcal{M}\\ (V,\,  e_1^{\prime}  g_1^{\prime} e_2 g_2)=1}} \frac{ \mu^2(V)b_V r_V }{|V|} \sum_{\substack{ e_1^{\prime}\in \mathcal{M}\\ ( e_1^{\prime},\,   g_1^{\prime} e_2 f_2)=1}} \frac{\mu^2(e_1^{\prime}) b_{e_1^{\prime}} r_{ e_1^{\prime}}}{ |e_1^{\prime}|}  \sum_{\substack{ g_1^{\prime}\in \mathcal{M}\\ ( g_1^{\prime},\,    g_2 f_2)=1}} \mu^2( g_1^{\prime}) r_{ g_1^{\prime}}^2  \sum_{e_2\in \mathcal{M} }\frac{\mu^2(e_2) b_{e_2 }}{|e_2|^2} \\  
&\times \sum_{f_2 \in \mathcal{M}}\mu^2(f_2) r_{f_2}^2 
\sum_{g_2 \in \mathcal{M}}\mu^2(g_2) r_{g_2}^2 h(V e_1^{\prime} g_1^{\prime} e_2 f_2 g_2) \prod_{\substack{P\mid V   e_1^{\prime}  e_2  }} B_P \prod_{\substack{P\mid  e_1^{\prime} g_1^{\prime} f_2  }} R_P \prod_{\substack{P\mid  V g_1^{\prime} g_2 }} R_P .
\end{align*}
By multiplicativity this transform into the Euler product of the form 
\begin{align*}
\mathcal{S}_{c, N} &=\prod_{d(P)< N} \left( 1+ \frac{2r_P}{|P|} B_P R_P  h(P) + \frac{2r_P^3}{|P|} B_P R_P^2 h(P) + r_P^2 R_P^2 h(P) + \frac{r_P^2}{|P|^2} B_P R_P^2 h(P)    \right.\\
&\quad \quad\quad \quad  \left.  +\frac{B_P h(P)}{|P|^2} + \frac{2r_P^2}{|P|^2} B_P R_P h(P) + \frac{r_P^4}{|P|^2}B_P R_P^2 h(P) + 2r_P^2 R_P h(P) +r_P^4 R_P^2 h(P)   \right) \\
&=\prod_{d(P)< N}  B_P R_P^2 h(P) \left( 1+r_P^2+ \frac{2 r_P}{|P|} +\frac{1}{|P|} \left(1-r_P^2\right)^2 \left(1-\frac{1}{|P|^2}\right)\right)\\
&=\mathcal{E}(N) \mathcal{E}_1(N)\prod_{d(P)< N}\left(1-\frac{1}{|P|} \right)^{-1} h(P) ,
\end{align*}
where $\mathcal{E}(N):=\prod_{d(P)< N}\left(1-r_{P}^2 \right)^{-2} \left(1+r_{P}^2 \right)$, and
\begin{align*}
 \mathcal{E}_1(N):=\prod_{d(P)< N}\left(1- \frac{(1-r_P)^2}{(|P|+1)(1+r_P^2)}+\left(1-\frac{1}{|P|}\right)\frac{(1-r_P^2)^2}{|P|(1+r_P^2)} \right).
\end{align*}
\subsubsection*{Euler product representation of $\mathcal{R}_{c, N}$}
On the other hand, since $d(P^k)=k+1$ for all $k\geq 1$, we can express
\begin{align*}
\mathcal{R}_{c, N}&=\prod_{d(P)<N}\left(1+3r_P^2 h(P)+5r_P^4 h(P)+\cdots\right)=\prod_{d(P)<N}\left(1-r_P^2 \right)^{-2} \left(1+ \frac{|P|-2}{|P|+1} r_P^2 + \frac{r_P^4}{|P|+1}   \right)\\
&=\mathcal{E}(N)\mathcal{E}_2(N),
\end{align*}
where
$
\mathcal{E}_2(N):=\prod_{d(P)<N}\left(1-\frac{3r_P^2}{(|P|+1)\left(1+r_P^2\right)}+\frac{r_P^4}{(|P|+1)\left(1+r_P^2\right)}\right).
$

From the estimation of $\mathcal{S}_{c, N}$ and $\mathcal{R}_{c, N}$, we finally obtain 
\begin{align}\label{ratio}
\frac{S_1}{S_2}\ge \frac{|\mathcal{H}_n|   \mathcal{E}(N) \mathcal{E}_1(N) \prod_{d(P)< N}  (1-|P|^{-1}) + O\left(q^{\left(\frac12+\frac{2c\zeta_{\mathbb{A}}(2)}{\ln q}+\frac{11\varepsilon}{6}\right)n}\right)}{|\mathcal{H}_n| \mathcal{E}(N) \mathcal{E}_2(N)\prod_{d(P)<N}h(P)^{-1}  + O\left( (q^{\left(\frac12+\frac{2c\zeta_{\mathbb{A}}(2)}{\ln q}+2\varepsilon\right)n}\right)}.
\end{align}
 Noting $N=\log n+\log_2 n +\log c$, we first optimize $c$ from the ratio to the right-hand side of \eqref{ratio} by comparing the error terms. To do this, we first calculate the size of $\mathcal{E}(N)=\prod_{d(P)<N}\left(1-r_P^2 \right)^{-2} \left(1+r_P^2\right)$. Applying Abel's summation formula,
\begin{align*}
&\ln \mathcal{E}(N)=-2\sum_{n\leq N}\Pi_{q}(n)\ln\left(\tfrac{2q^n}{q^N}-\tfrac{q^{2n}}{q^{2N}}\right)
+\sum_{n\leq N}\Pi_{q}(n)\ln\left(2-\tfrac{2q^n}{q^N}+\tfrac{q^{2n}}{q^{2N}}\right)\\
&= \frac{\zeta_{\mathbb{A}}(2)}{\ln q}\left(2(2-\ln 4)+(\ln 2+\pi/2-2)\right)\frac{q^N}{N}\left(1+O\left(\frac{\log N}{N}\right)\right),
\end{align*}
where the first and second integrals are referenced from \cite[page $841$]{AMMP} and \cite[page $20$]{DM} respectively by using the prime polynomial theorem and change of variable $q^t=u$. So, from the choice of $N$, we have
\begin{align*}
\mathcal{E}(N)=q^{\widetilde{c}\left(1+O\left(\frac{\log_2 n}{\log n}\right)\right)n},
\end{align*}
where $\widetilde{c}=\left(c(2+\pi/2-3\ln 2)\zeta_{\mathbb{A}}(2)\right)/\ln q$.
Noticing that $\mathcal{E}_2(N)\asymp 1/n$, from \eqref{ratio}, we compare the first term of the denominator to the both error terms to restrict ourself
\begin{align}\label{optimzed constant}
\frac12+\frac{2c\zeta_{\mathbb{A}}(2)}{\ln q}< 1+\frac{c\left(2+\pi/2-3\ln 2\right)\zeta_{\mathbb{A}}(2)}{\ln q}\implies c<\frac{\ln q}{2(3\ln 2-\pi/2)\zeta_{\mathbb{A}}(2)}.
\end{align}
Also, from the above computations, we obtain
\begin{align}\label{scx}
\mathcal{S}_{c, N}=q^{n\widetilde{c}\left(1+O(\log_2 n/\log n)\right)}.
\end{align}
To extract the main term from the ratio on the right-hand side of \eqref{ratio}, it remains to estimate $\mathcal{E}_1(N)$, $\prod_{d(P)\le N} \left(1-|P|^{-1}\right)$, and $\mathcal{E}_2(N)\prod_{d(P)\le N} h(P)^{-1} $.  
Using the classical estimate using the Euler--Maclaurin summation formula,
\[
\sum_{n\le N} \frac{1}{n} = \ln N + \gamma + \frac{1}{2N} + O\Bigl(\frac{1}{N^2}\Bigr),
\]
one readily obtains
\[
\prod_{d(P)\le N} \left(1-\frac{1}{|P|}\right)^{-1} = e^{\gamma} N \left(1 + \frac{1}{2N} + O\Bigl(\frac{1}{N^2}\Bigr)\right).
\]
Similarly, following the approach used above for evaluating Euler products, we find
\[
\mathcal{E}_1(N) = q^{\left( \left(c_2 - c_3 + O\bigl(\frac{\log N}{N}\bigr)\right) \frac{\zeta_{\mathbb{A}}(2)}{N \ln q} \right)},
\]
where the constants $c_2$ and $c_3$ are as in \cite[page 21]{DM}, and
\[
\mathcal{E}_2(N)\prod_{d(P)\le N} h(P)^{-1} = \prod_{d(P)\le N} \left( 1 + \frac{(1-r_P^2)^2}{|P|(1+r_P^2)} \right) 
= q^{\left( \left(c_2 + O\bigl(\frac{\log N}{N}\bigr)\right) \frac{\zeta_{\mathbb{A}}(2)}{N \ln q} \right)}.
\]

  \end{document}